\newcommand{\F}{{\mathcal{F}}}
\newcommand{\G}{{\mathcal{G}}}
\renewcommand{\O}{{\mathcal O}}
\renewcommand{\dim}{\mathrm{dim}\,}
\newcommand{\umapright}[1]{\hskip3pt\smash{\mathop{\longrightarrow}\limits^{#1}}\hskip3pt}
\theoremstyle{plain}
\newtheorem{teo}{Theorem}[section]
\newtheorem{prop}{Proposition}[section]
\newtheorem{defi}{Definition}[section]
\newtheorem{lema}{Lemma}[section]
\theoremstyle{definition}
\newtheorem{obs}{Remark}
\begin{document}
\title{hypersurfaces invariant by pfaff systems}
\author{Maurício  Corrêa  Jr.,  Luis G. Maza and Márcio G. Soares }
\address{ Maurício  Corrêa  Jr., Dep. Matemática
ICEx - UFMG, Campus Pampulha,
31270-901 Belo Horizonte - Brasil.}
\email{mauricio@mat.ufmg.br}

\address{Luis G. Maza ,
Departamento de Matematica - UFAL, Campus A.C. Sim\~oes s/n, 57072-090
Maceió - Brasil.}
\email{lmaza@im.ufal.br}

\address{Márcio G. Soares,
Dep. Matemática
ICEx - UFMG, Campus Pampulha
31270-901 Belo Horizonte - Brasil.
}\email{msoares@mat.ufmg.br}

\subjclass{32S65, 58A17. Partially supported by CNPq, CAPES and FAPEMIG, Brasil}

\begin{abstract}
We present results expressing conditions for the existence of meromorphic first integrals for Pfaff
systems of arbitrary codimension on complex manifolds. Some of the results presented improve
previous ones due to J-P. Jouanolou and E. Ghys. We also present an enumerative result counting the number of  hypersurfaces
invariant by a projective holomorphic foliation with split tangent sheaf.
\end{abstract}

\maketitle

\section{INTRODUCTION}

The theory of algebraic integrability introduced by G. Darboux was pursued and expanded by several people, among which we would like to quote H. Poincar\'e, P. Painlev\'e, L. Autonne and, more recently, J-P. Jouanolou and E. Ghys.

In \cite{J}, J-P. Jouanolou gave an extensive account of the theory of algebraic Pfaff equations which included several results on algebraic integrability, among which conditions for the
existence of rational first integrals for Pfaff equations on the projective spaces $\mathbb{P}_{\mathbb{K}}^n$, where $\mathbb{K}$ is an algebraically closed field of characteristic zero.

This quest was continued in \cite{J2}
where he proved that, on a connected compact complex manifold $M$ satisfying certain geometrical conditions, the maximum number of irreducible hypersurfaces which are solutions of a codimension one Pfaff equation
$\omega =0$, $\omega \in H^0(M, \Omega^1_M \otimes_{\mathscr{O}_M} \mathscr{L})$, $\mathscr{L}$ a line bundle on $M$, is bounded by
\begin{equation}\label{jo}
\kappa_J =\dim_{\mathbb{C}} \left[H^0(M, \Omega^2_M \otimes_{\mathscr{O}_M} \mathscr{L})\,/\, \omega \wedge H^0(M, \Omega^1_M) \right] + \varrho +1
\end{equation}
where $\varrho$ is the Picard number of $M$. Further, if the number of invariant hypersurfaces is at least $\kappa_J$, then there are infinitely many such hypersurfaces and this happens if, and only if, the
Pfaff equation $\omega =0$ admits a meromorphic first integral.

This result was improved by E. Ghys in \cite{G}. Ghys showed that
Jouanolou's result holds if the geometrical conditions on $M$,
assumed in \cite{J2}, are dropped.  More precisely, the result of
\cite{G} states that if a Pfaff equation $\omega=0$, given by
$\omega \in H^0(M, \Omega^1_M \otimes_{\mathscr{O}_M} \mathscr{L})$,
does not admit a meromorphic first integral, then the number of
invariant irreducible divisors must be smaller than
\begin{equation}\label{ghys}
\kappa_G = \mathrm{dim}_{\mathbb{C}}\left[\mathrm{H}^0(M,\Omega_{M}^2\otimes_{\mathscr{O}_M}  \mathscr{L})/\omega\wedge\mathrm{H}^0(M,\Omega_{cl}^1)\right] + \mathrm{dim}_{\mathbb{C}} \mathrm{H}^1(M,\Omega_{cl}^1) + 2,
\end{equation}
where $\Omega_{cl}^1$ denotes the sheaf of closed holomorphic
$1$-forms on $M$.

In the case of one-dimensional Pfaff systems, that is, in the case of holomorphic foliations of dimension one, M. Corr\^ea Jr. \cite{MBCJr} presented a similar result which reads:
let $M$ be as above and let $\F$ be a one-dimensional holomorphic foliation on $M$. If $\F$ admits at least
\begin{equation}\label{Cjr}
\kappa_{CJ} =
\dim_{\mathbb{C}}\left[\mathrm{H}^0(M,K_{\F})/i_{X_{\F}}(\mathrm{H}^0(M,\Omega_{cl}^1))\right]+\dim_{\mathbb{C}} \mathrm{H}^1(M,\Omega^1_{cl})+\dim_{\mathbb{C}}M
\end{equation}
invariant irreducible hypersurfaces, then $\F$ admits a meromorphic first integral. Here, $K_{\F}$ is the canonical  bundle of $\F$ and $i_{X_{\F}}(\cdot)$
is the contraction by a vector field $X_{\F}$ inducing the foliation $\F$.

Another result in this direction was presented by S. Cantat in \cite{Can}. Its has a dynamical flavour and reads: let $M$ be as before and let $f$ be a
 dominant endomorphism of $M$. If there are $\kappa_C$
totally $f$-invariant hypersurfaces $W_i \subset M$ (a hypersurface $W \subset M$ is totally $f$-invariant provided $f^{-1}(W) = W$) with
\begin{equation}
\kappa_C > \dim_{\mathbb{C}} H^1(M, \Omega^1_M) + \dim_{\mathbb{C}} M,
\end{equation}
then there is a non constant meromorphic function $\Phi$ and a non zero complex number $\alpha$ such that $\Phi\circ f = \alpha \,\Phi$.

In this note we will consider higher codimensional Pfaff systems on connected complex manifolds.

The affine case was dealt with in \cite{JPAA} and the result is: let $\mathbb{K}$ be an algebraically closed field of characteristic zero and $\omega$ a polynomial $r$-form of degree $d$ on $\mathbb{K}^n$. If $\omega$ admits
\begin{equation}\label{jpaa}
{d-1+n \choose n}\cdot{n \choose r+1}+r+1
\end{equation}
invariant irreducible algebraic hypersurfaces, then $\omega$ admits a rational first integral.

We now state the results of this note, the first one being a higher codimensional counterpart of Jouanolou and Ghys theorems.

\begin{teo}\label{Ghys}
Let $\F$ be a codimension $r$  Pfaff system on a connected, compact,  complex
manifold $M$,  defined by  $\omega\in H^{0}(M,
\Omega^{r}_M \otimes \mathscr{L})$.  If $\F$ admits
\begin{equation}\label{G0}
\dim_{\mathbb{C}}\left[H^{0}(M,
\Omega^{r+1}_M \otimes \mathscr{L}) / \omega \wedge H^{0}(M,
\Omega_{cl}^1)\right] + \dim_{\mathbb{C}}H^{1}(M, \Omega_{cl}^{1}) + r + 1
\end{equation}
invariant irreducible analytic hypersurfaces, then $\F$ admits a
meromorphic first integral.
\end{teo}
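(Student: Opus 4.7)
The strategy is a codimension-$r$ generalization of E.~Ghys's argument for codimension-one Pfaff equations, blended with the ``$r+1$ redundancy'' needed to exploit the higher rank of the kernel of $\omega\wedge(\cdot)$.

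\emph{Step 1 (Logarithmic forms from invariant hypersurfaces).} To each invariant irreducible hypersurface $V_i$ ($i=1,\dots,N$, with $N$ equal to the bound in \eqref{G0}) I attach the logarithmic $1$-form $\eta_i$ given locally by $\eta_i|_{U_\alpha}=ds_{i,\alpha}/s_{i,\alpha}$, where $\{s_{i,\alpha}\}$ is a section of the line bundle $\mathscr{O}_M(V_i)$ cutting out $V_i$. On overlaps, $\eta_i|_{U_\alpha}-\eta_i|_{U_\beta}=d\log g_{i,\alpha\beta}$ is closed holomorphic, so the $\eta_i$'s define a \v{C}ech cocycle whose class $[\eta_i]\in H^{1}(M,\Omega^{1}_{cl})$ is the obstruction to globalizing $\eta_i$ as a single meromorphic $1$-form.

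\emph{Step 2 (Holomorphic extension of $\omega\wedge\eta_i$).} The $\F$-invariance of $V_i$ translates locally into $\omega\wedge ds_{i,\alpha}=s_{i,\alpha}\,\theta_{i,\alpha}$ for some holomorphic $(r+1)$-form $\theta_{i,\alpha}$ with values in $\mathscr{L}$; these glue into a global section $\theta_i\in H^{0}(M,\Omega^{r+1}_M\otimes\mathscr{L})$, namely the holomorphic extension of $\omega\wedge\eta_i$.

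\emph{Step 3 (Main linear map and kernel estimate).} I form
\[
\Phi\colon\mathbb{C}^{N}\longrightarrow \frac{H^{0}(M,\Omega^{r+1}_M\otimes\mathscr{L})}{\omega\wedge H^{0}(M,\Omega^{1}_{cl})}\oplus H^{1}(M,\Omega^{1}_{cl}),\qquad(a_i)\mapsto\Big(\Big[\textstyle\sum a_i\theta_i\Big],\,\Big[\textstyle\sum a_i\eta_i\Big]\Big).
\]
Hypothesis \eqref{G0} immediately gives $\dim\ker\Phi\ge r+1$.

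\emph{Step 4 (Closed meromorphic $1$-forms annihilating $\omega$).} For each $(a_i)\in\ker\Phi$: the vanishing of the second component lifts $\sum a_i\eta_i$ to a global closed meromorphic $1$-form $\tilde\eta$ on $M$, logarithmic on $\bigcup V_i$; the vanishing of the first provides a closed holomorphic $\alpha\in H^{0}(M,\Omega^{1}_{cl})$ with $\omega\wedge\tilde\eta=\omega\wedge\alpha$. Then $\xi:=\tilde\eta-\alpha$ is a global closed meromorphic $1$-form satisfying $\omega\wedge\xi=0$. Choosing a basis of $\ker\Phi$ yields $r+1$ linearly independent such forms $\xi_1,\dots,\xi_{r+1}$.

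\emph{Step 5 (Extracting a first integral).} Since $\omega$ defines an integrable Pfaff system of codimension $r$, it is locally decomposable as $\omega=\omega_1\wedge\cdots\wedge\omega_r$, and the sheaf $\{\tau:\omega\wedge\tau=0\}$ of meromorphic $1$-forms has generic rank $r$. Therefore any $r+1$ global sections $\xi_1,\dots,\xi_{r+1}$ are linearly dependent over the field of meromorphic functions on $M$: there exist meromorphic $h_1,\dots,h_{r+1}$, not all zero, with $\sum h_k\xi_k=0$. Combining this dependence with the closedness and the logarithmic (integer residue) nature of the $\xi_k$'s, exactly along the lines of the passage from linear relations among closed logarithmic forms to first integrals in \cite{G} and \cite{MBCJr}, one produces a nonconstant meromorphic function $F$ with $dF\wedge\omega=0$, i.e.\ a meromorphic first integral for $\F$.

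The principal technical obstacles lie in Steps~4 and~5. In Step~4 one must justify both the lift of a \v{C}ech cocycle to an honest global meromorphic $1$-form and the correction by the closed holomorphic $\alpha$. The heart of the argument, however, is Step~5: extracting a meromorphic first integral from $r+1$ closed logarithmic $1$-forms that are pointwise meromorphically dependent inside a rank-$r$ distribution. This is precisely the new codimension-$r$ ingredient going beyond Ghys's codimension-one argument, and it explains the appearance of the ``$+(r+1)$'' summand (rather than Ghys's ``$+2$'') in the bound \eqref{G0}.
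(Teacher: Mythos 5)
Your Steps 1--4 reproduce the paper's construction, with one caveat: the paper factors your map $\Phi$ as a composition, first $\Psi:\mathrm{Div}(M,\F)\otimes\mathbb{C}\to H^1(M,\Omega^1_{cl})$ and then a map $\Theta$ defined \emph{only on} $\ker\Psi$. This is not a stylistic choice: the local forms $\theta_{i,\alpha}=\omega\wedge ds_{i,\alpha}/s_{i,\alpha}$ do not glue to a global section of $\Omega^{r+1}_M\otimes\mathscr{L}$, since on overlaps they differ by $\omega\wedge d\log g_{i,\alpha\beta}$; so the first component of your $\Phi$ is not well defined until the cocycle class $[\sum a_i\eta_i]$ has been killed. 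Your Step 4 implicitly performs the two steps in the right order and the dimension count is the same, so this is repairable, but as written Step 2's claim that the $\theta_{i,\alpha}$ glue is false.

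The genuine gap is Step 5, which you correctly identify as the new codimension-$r$ ingredient but do not actually prove. First, Definition 2.1 of the paper imposes no integrability on a Pfaff system: $\omega$ is an arbitrary nontrivial section of $\Omega^r_M\otimes\mathscr{L}$, so you may not assume it is locally decomposable, nor that $\{\tau:\omega\wedge\tau=0\}$ has generic rank $r$ (for $\omega=dx\wedge dy+dz\wedge dw$ on $\mathbb{C}^4$ this annihilator is zero). Second, even where the annihilator has rank $r$, pointwise (or generic) dependence of $\xi_1,\dots,\xi_{r+1}$ only produces a relation with coefficients meromorphic near a generic point, not coefficients in the global field $\mathscr{M}(M)$; the case in which the $\xi_j$ are $\mathscr{M}(M)$-independent cannot be excluded and must be handled. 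The paper's proof does exactly this via a dichotomy on $\alpha_1=\widetilde{\xi}_1\wedge\cdots\wedge\widetilde{\xi}_r$ and $\alpha_2=\widetilde{\xi}_2\wedge\cdots\wedge\widetilde{\xi}_{r+1}$: if both are nonzero, the division argument gives $\omega=R_1\alpha_1=R_2\alpha_2$ and closedness of the $\alpha_i$ makes $R_1/R_2$ a first integral, nonconstant because $|\alpha_1|_\infty\neq|\alpha_2|_\infty$; if one vanishes, an honest $\mathscr{M}(M)$-relation $\widetilde{\xi}_{m+1}=\sum R_i\widetilde{\xi}_i$ exists and closedness yields $dR_j\wedge\omega=0$, with nonconstancy of some $R_{i_0}$ guaranteed by the residue Lemma 3.1 (injectivity of $\mathrm{Div}(M,\F)\otimes\mathbb{C}\to\mathscr{Q}^1_M$). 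Your appeal to ``the lines of Ghys's argument'' supplies neither the case analysis nor the nonconstancy, and both are essential; without them the proof does not close.
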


The second result makes use of  extactic varieties which are defined in terms of jet bundles and determinants and are discussed in Section \ref{secaoext}.

\begin{teo}\label{integral2}  Let $\F$ be a  holomorphic foliation of dimension $r$, with locally free tangent sheaf, on a complex manifold $M$ and
let $V$ be a finite dimensional linear system on $M$ with
$\dim_\mathbb{C} V = k$. If  $\mathrm{rk}\,T^{(k)}< k$,  then $\F$
has a meromorphic first integral, where
$$ T^{(k)}: V \otimes \mathscr{O}_{M}\longrightarrow J_{\F}^{k}V.$$
Moreover,  if $M$ is compact, then there
exists an algebraic manifold $N$, of dimension $m=m(k,r)$, and a
meromorphic map
$$\varphi:M\dashrightarrow N,$$ such that the fibers of $\varphi$ are $\F$-invariant.
\end{teo}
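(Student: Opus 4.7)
The plan is to adapt the extactic/Wronskian argument of Pereira to a foliation $\F$ of dimension $r>1$. First I would fix a basis $f_1,\dots,f_k$ of $V$ and, on a distinguished foliation chart, a local frame $X_1,\dots,X_r$ of the tangent sheaf $T\F$. In these coordinates, sections of $J_{\F}^{k}V$ are described by iterated directional derivatives $X^{I}f_j=X_{i_1}\cdots X_{i_s}f_j$ for ordered multi-indices $I$ with $|I|\le k$, and the map $T^{(k)}$ records these derivatives column by column. Under this identification the hypothesis $\mathrm{rk}\,T^{(k)}<k$ is equivalent to the identical vanishing on $M$ of every $k\times k$ minor of the foliated jet matrix, i.e.\ of every generalized foliated Wronskian of $f_1,\dots,f_k$.

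The next step is to pass to a generic leaf $L$ of $\F$, where the restrictions $f_j|_L$ are holomorphic functions on an $r$-dimensional manifold and the $k\times k$ minors above become their usual generalized Wronskians. By the multivariable Wronskian criterion (in the Peano--Wolsson form for holomorphic functions), such simultaneous vanishing forces the existence of constants $c_1(L),\dots,c_k(L)\in\mathbb{C}$, not all zero, with $\sum_j c_j(L)\,f_j|_L\equiv 0$. Applying Cramer's rule to any generically nonvanishing $(k-1)\times(k-1)$ minor of $T^{(k)}$ upgrades this pointwise linear relation to a globally defined meromorphic cocycle whose associated projective tuple
$$\varphi : M \dashrightarrow \mathbb{P}(V^{*}) \simeq \mathbb{P}^{k-1}$$
is constant on each leaf of $\F$. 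Any ratio $c_i/c_j$ is then a non-constant meromorphic first integral for $\F$.

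For the final assertion, I would set $N$ to be the image of $\varphi$. Since $M$ is compact, Remmert's proper mapping theorem and Chow's theorem imply that $\overline{\varphi(M)}\subset\mathbb{P}^{k-1}$ is algebraic, and after resolution of singularities one obtains the desired smooth algebraic manifold $N$ with $\F$-invariant fibres and $m=m(k,r)=\dim N$ controlled purely in terms of $k$ and $r$ (at worst $k-1$). The hardest part, in my view, is the leafwise Wronskian step: the multivariable Wronskian criterion is considerably subtler than its one-variable counterpart — Wolsson's examples show that vanishing of a single ``principal'' Wronskian is not enough and one must work with \emph{all} generalized Wronskians — and making the translation between $T^{(k)}$ and these Wronskians fully precise requires a careful analysis of the jet bundle $J_{\F}^{k}V$ as defined in Section~\ref{secaoext}, together with the globalisation step turning the leafwise Cramer-rule data into a well-defined meromorphic map on $M$.
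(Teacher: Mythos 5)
Your proposal reaches the right conclusion but by a genuinely different route from the paper's. The paper never restricts to leaves and never invokes a Wronskian-type dependence criterion: it reads $\mathrm{rk}\,T^{(k)}<k$ as an $\mathscr{M}(\mathcal{U}_{\alpha})$-linear dependence among the columns of the jet matrix, takes a relation $\sum_{j=1}^{m}\theta_j^{\alpha}X_{i,\alpha}(s_j^{\alpha})=0$ with $m$ minimal and $\theta_m^{\alpha}=1$, applies each $X_{i,\alpha}$ to the zeroth-order identity $\sum_j\theta_j^{\alpha}s_j^{\alpha}=0$, and concludes from minimality that $X_{i,\alpha}(\theta_j^{\alpha})=0$ for all $i,j$; the same minimality shows the $\theta_j^{\alpha}$ agree on overlaps, so they glue to global, not-all-constant meromorphic first integrals, and the compact case is settled by applying the algebraic reduction theorem to the subfield $\mathscr{M}(\F)\subset\mathscr{M}(M)$ of first integrals, which yields $\dim N=\mathrm{tr}\!\deg_{\mathbb{C}}\mathscr{M}(\F)\leq\mathrm{codim}_{\mathbb{C}}\F$. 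Your route instead leans on the several-variable generalized-Wronskian criterion for analytic functions applied leafwise, on a translation between the non-commuting iterated derivatives $X_{i_1}\cdots X_{i_s}$ and coordinate partials along a leaf, and on a Cramer-rule globalization; all of this can be made to work (the criterion does hold for holomorphic functions, and the two jet matrices are related by an invertible, order-triangular change of rows), but each step is a nontrivial external input that the paper's differentiate-and-use-minimality trick simply bypasses. What your approach buys is a concrete geometric picture --- each generic leaf lies in a unique member of the linear system, and $\varphi$ is the explicit classifying map to $\mathbb{P}(V)\simeq\mathbb{P}^{k-1}$, whose image is made algebraic by Remmert and Chow --- at the price of the weaker bound $\dim N\leq k-1$ in place of $\mathrm{codim}_{\mathbb{C}}\F$. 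Two small repairs if you pursue your version: only \emph{some} ratio $c_i/c_j$ need be non-constant (this is exactly where the $\mathbb{C}$-linear independence of the basis of $V$ must be used, as in the paper), and the Cramer step should be carried out at the generic rank $m$ of $T^{(k)}$ over $\mathscr{M}(M)$ rather than at $k-1$, since the rank may drop further and the uniqueness of the leafwise relation is only available after passing to a minimal dependent subfamily.
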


The third result is enumerative and projective, in which case we can use the notion of degree of a projective Pfaff equation, as defined in Section \ref{enum}. It states

\begin{prop}\label{counting}
Let $\F$ be a holomorphic foliation on
$\mathbb{P}^n$ of dimension $r$, of degree $d>0$ and with split tangent sheaf, that is, $T\F=\bigoplus_{i=1}^r T\F_i =\bigoplus_{i=1}^r\mathscr{O}_{\mathbb{P}^n}(1-d_i)$.  Suppose no $\F_i$ has a rational first integral, $i=1, \dots, r$. Then, the number of $\F$-invariant irreducible
hypersurfaces of degree $\nu$, counting multiplicities, is bounded by
\begin{equation}\label{numinv}
{\nu+n\choose n}+\frac{\deg(\F)}{\nu r}{{\nu + n\choose n}\choose 2} - \frac{1}{\nu} {{\nu + n\choose n}\choose 2} .
\end{equation}
\end{prop}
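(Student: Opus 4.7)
The plan is to reduce Proposition \ref{counting} to the one-dimensional extactic divisor bound (\`a la Jouanolou--Pereira) applied to each factor $\F_i$ of the split foliation, and then average the resulting estimates. The key observation is that, because $T\F = \bigoplus_{i=1}^{r} T\F_i$, a hypersurface $H \subset \p$ is $\F$-invariant if and only if it is $\F_i$-invariant for every $i = 1,\dots,r$. Writing $\mathcal{N}_{\F}(\nu)$ (resp.\ $\mathcal{N}_{\F_i}(\nu)$) for the number of invariant irreducible hypersurfaces of degree $\nu$, counted with multiplicity, we therefore have
\[
\mathcal{N}_{\F}(\nu) \;\le\; \min_i \mathcal{N}_{\F_i}(\nu) \;\le\; \frac{1}{r}\sum_{i=1}^{r} \mathcal{N}_{\F_i}(\nu).
\]

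For each $i$, the subfoliation $\F_i$ is generated by a twisted vector field $X_i$ of degree $d_i$, since $T\F_i = \A_{\p}(1-d_i)$. Take $V = H^{0}(\p,\A_{\p}(\nu))$ and $N := \dim V = \binom{\nu+n}{n}$. The associated $V$-extactic divisor $E_{X_i}^{V}$ is the determinant of the $N\times N$ matrix whose $(j,k)$-entry is $X_i^{j-1}(f_k)$, for a chosen basis $\{f_k\}$ of $V$. Since $\F_i$ admits no rational first integral, the extactic theorem forces $E_{X_i}^{V}\neq 0$, and every $\F_i$-invariant hypersurface of degree $\nu$ appears in $E_{X_i}^{V}$ with at least multiplicity one. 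Because $X_i^{j-1}(f_k)$ is a section of $\A_{\p}(\nu+(j-1)(d_i-1))$, a row-by-row count of degrees gives
\[
\deg E_{X_i}^{V} \;=\; \sum_{j=1}^{N}\bigl(\nu + (j-1)(d_i-1)\bigr) \;=\; N\nu + (d_i-1)\binom{N}{2},
\]
so that $\nu\,\mathcal{N}_{\F_i}(\nu) \le N\nu + (d_i-1)\binom{N}{2}$, i.e.,
$\mathcal{N}_{\F_i}(\nu) \le N + \tfrac{d_i-1}{\nu}\binom{N}{2}$.

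Combining the two estimates, and using that for the split foliation $\deg(\F) = \sum_{i=1}^{r} d_i$, one obtains
\[
\mathcal{N}_{\F}(\nu) \;\le\; N + \frac{\sum_i d_i - r}{r\nu}\binom{N}{2} \;=\; \binom{\nu+n}{n} + \frac{\deg(\F)}{\nu r}\binom{\binom{\nu+n}{n}}{2} - \frac{1}{\nu}\binom{\binom{\nu+n}{n}}{2},
\]
which is exactly \eqref{numinv}. The only substantive obstacle is justifying non-vanishing of the extactic determinants $E_{X_i}^{V}$: this is precisely where the hypothesis that no $\F_i$ admits a rational first integral enters, via the extactic machinery from Section \ref{secaoext} applied to the complete linear system of degree-$\nu$ forms. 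Everything else reduces to linear algebra, degree counting, and the reduction to one-dimensional subfoliations afforded by the split assumption.
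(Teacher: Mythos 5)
Your proposal is correct and follows essentially the same route as the paper: reduce to the one-dimensional subfoliations $\F_i$ via the splitting, invoke non-vanishing of each extactic $\boldsymbol{\varepsilon}(\F_i,V)$ for $V=H^0(\mathbb{P}^n,\mathscr{O}_{\mathbb{P}^n}(\nu))$ from the no-first-integral hypothesis, and count degrees using $\deg\boldsymbol{\varepsilon}(\F_i,V)=\nu\binom{\nu+n}{n}+(d_i-1)\binom{\binom{\nu+n}{n}}{2}$. The only cosmetic difference is that you average the individual bounds $\mathcal{N}_{\F_i}(\nu)$ while the paper multiplies the extactics into a single divisor $\mathscr{E}$ and counts the degree of the product; the two bookkeepings are identical.
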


\section{PRELIMINARIES}

Let $M$ be a connected complex manifold of dimension $n$. Following Jouanolou \cite{J2} we have:

\begin{defi}\label{P}
Let $\mathscr{L}$ be a holomorphic line bundle on $M$. A Pfaff system of codimension $r$ on $M$, $1 \leq r \leq \dim_\mathbb{C}M-1$, is a nontrivial section $\omega \in H^0(M, \Omega^r \otimes_{\mathscr{O}_M} \mathscr{L})$, where $\Omega^r$ is the sheaf of holomorphic $r$-forms on $M$.
\end{defi}
We shall assume that the zero set of $\omega$ has codimension $\geq 2$ (this is not at all restrictive, as explained in \cite{BM}).

Now, given a hypersurface $X \subset M$ consider the inclusion $\textit{i}: X \to M$. We let $\textit{i}_X \omega$ denote the section $\textit{i}_X \omega \in H^0(\Omega_X^r \otimes \mathscr{L}_{|X})$ obtained by the projection of $\omega$ via $(\Omega^r \otimes \mathscr{L})_{|X} \longrightarrow \Omega_X^r \otimes \mathscr{L}_{|X}$.

\begin{defi}\label{Pinv}
$X$ is a solution of $\omega$ if $\textit{i}_X \omega \equiv 0$.
\end{defi}

By a slight abuse of language we will say that $X$ is $\omega$-\emph{invariant} if $X$ is a solution of $\omega$.

Now for foliations.
\begin{defi}\label{fol}
Let $M$ be a complex manifold of dimension and $ \mathscr{O}(TM)$ be its tangent sheaf. A singular holomorphic foliation $\F$ on $M$, of dimension $r$, is a coherent involutive subsheaf $T\F$ of $\mathscr{O}(TM)$ of rank $r$. Involutive (or integrable) means that, for each $x \in M$, the stalk $T\F_x$ is closed under the Lie bracket operation, $[T\F_x, T\F_x\,]\subset T\F_x$.
\end{defi}
In the above, the rank of $T\F$ is the rank of its locally free part. Since $\mathscr{O}(TM)$ is locally free, the coherence of $T\F$ simply means that it is locally finitely generated. We call
$T\F$ the {\it tangent sheaf} of the distribution and the quotient,
$\mathcal{N}_{\F}=\mathscr{O}(TM)/T\F$, its {\it normal sheaf}.

The \emph{singular set} of $\F$ is defined by
$$
S(\F) = \{ x \in M \,:\, (\mathcal{N}_{\F})_x \; \mathrm{is\;
not\; a\; free \;} \mathscr{O}_x-\mathrm{module}\}.
$$
On $M \setminus S(\F)$ there is a unique (up to isomorphism) holomorphic vector subbundle $E$ of
the restriction ${TM}_{\vert\,{M \setminus S(\F)}}$, whose sheaf of
germs of holomorphic sections, $\widetilde{E}$, satisfies
$\widetilde{E}= T\F_{\vert\,{M \setminus S(\F)}}$. Clearly $r=$ rank of $E$.

We will assume that $T\F$ is {\sl full} (or saturated) which
means: let $U$ be an open subset of $M$ and $\xi$ a holomorphic section of $\mathscr{O}(TM)_{|U}$ such that $\xi_x \in T\F_x$ for all $x \in U \cap (M \setminus S(\F))$. Then we have that for all $x \in U$,
$\xi_x \in T\F_x$. In this case the foliation $\F$ is said to be {\sl reduced}.

An equivalent formulation of \textsl{full} is as follows: let $ \mathscr{O}(T^\ast M)$ be the cotangent sheaf of $M$. Set $T\F^o = \{ \omega \in
\mathscr{O}(T^\ast M)\,:\, i_{\gamma}\omega =0\,\,\forall \;\gamma \in T\F \}$ and
$T\F^{oo} = \{ \gamma \in \mathscr{O}(TM)\,:\, i_{\gamma}\omega
=0\,\,\forall \;\omega  \in T\F^o\}$, where $i$ is the contraction. $T\F$ is full if
$T\F = T\F^{oo}$. Note that integrability of
$T\F$ implies integrability of $T\F^{oo}$.

Singular  foliations can dually be defined in terms of the cotangent
sheaf. Thus a {\sl singular foliation of corank $q$}, $\G$, is a
coherent subsheaf $N\G$ of rank $q$ of $\mathscr{O}(T^\ast M)$,
satisfying the integrability condition
$$
d N\G_x \subset (\mathscr{O}(T^\ast M) \wedge N\G)_x
$$
outside the set $S(\G) = \mathrm{Sing}(\mathscr{O}(T^\ast M) /
N\G)$.

 $N\G$ is called the {\sl conormal} sheaf of the foliation $\G$. Its annihilator
\[
N\G^o=\{\, \gamma \in \mathscr{O}(TM)\,:\, i_\gamma \omega=0\ \
\text{for all} \ \omega \in N\G\,\}
\]
 is a singular foliation of dimension $r=\dim M-q$.  See T. Suwa \cite{Suw} for the relation between these two definitions.

We remark that, if a foliation $\F$ is reduced then
$\mathrm{codim}\, S(\F) \geq 2$ and reciprocally, provided $T\F$ is
locally free (see \cite{Suw}). This is a useful concept since it
avoids the appearance of ``fake" (or ``removable") singularities.

Also, a singular foliation $\G$ of corank $q$  induces  a Pfaff
system of codimension $q$. In fact, The $q$-th wedge product of the
inclusion $N\G \subset \mathscr{O}(T^\ast M)$ gives rise to a
nonzero twisted differential $q$-form $\omega_{G}\in
H^0(\Omega^k_X\otimes\det N\G)$.

A last definition. For the concept of meromorphic map refer to
\cite{R}.
\begin{defi}\label{intpri}
If $\F$ is a Pfaff equation or a singular holomorphic foliation on $M$, a first integral for $\F$ is a non-constant meromorphic map $f : M \longrightarrow N$, where $N$ is a complex manifold, such that the fibers of $f$ are $\F$-invariant.
\end{defi}

\section{PROOF OF THEOREM \ref{Ghys}}

Let $M$ be a complex manifold. Denote $\Omega_{cl}^1$ the sheaf of germs of closed holomorphic differential $1$-forms on $M$. We recall the statement:\\

\noindent\textbf{Theorem 1.1}\emph{
Let $\F$ be a Pfaff system on a compact, connected, complex
manifold $M$,  induced by an $r$-form $\omega\in H^{0}(M,
\Omega^{r} \otimes \mathscr{L})$.  If $\F$ admits
\begin{equation}\label{G0}
\dim_{\mathbb{C}}H^{1}(M, \Omega_{cl}^{1}) + \dim_{\mathbb{C}}\left(H^{0}(M,
\Omega^{r+1} \otimes \mathscr{L}) / \omega \wedge H^{0}(M,
\Omega_{cl}^1)\right) + r + 1
\end{equation}
invariant irreducible analytic hypersurfaces, then $\F$ admits a
meromorphic first integral.
}

\begin{proof}
Note that, by the compactness of $M$, the spaces appearing in (\ref{G0}) are finite dimensional. Part of the proof recalls the arguments of \cite{G}. Denote by $\mathrm{Div}(M,\F)$ the abelian group of divisors on $M$ which are
invariant by $\F$. We have the homomorphism
\begin{equation}\label{G1}
\begin{array}{c}
 \mathrm{Div}(M,\F) \longrightarrow \mathrm{Pic}(M) \\
  \\
\hskip 60pt \sum\limits_\alpha\, \lambda^\alpha\, L^\alpha \longmapsto \bigotimes\limits_\alpha\, [L^\alpha]^{\otimes \lambda^\alpha}, \lambda^\alpha \in \mathbb{Z}.
\end{array}
\end{equation}
Since $\mathrm{Pic}(M) \simeq H^1(M, \O^\ast)$, logarithmic differentiation defines a homomorphism
\begin{equation}\label{G2}
\begin{array}{c}
H^1(M, \O^\ast) \longrightarrow H^{1}(M, \Omega_{cl}^1) \\
  \\
\hskip 34pt g \longmapsto \displaystyle\frac{d g}{g}.
\end{array}
\end{equation}
Composition of (\ref{G1}) and (\ref{G2}) gives a
$\mathbb{C}$-linear map
\begin{equation}\label{G3}
\Psi: \mathrm{Div}(M,\F) \otimes \mathbb{C} \longrightarrow H^{1}(M, \Omega_{cl}^1)
\end{equation}
which is expressed, in terms of a sufficiently fine open cover $\{U_i\}_{i \in \Lambda}$ of $M$ by: if $L^\alpha$ is defined by $f_i^\alpha =0$ in $U_i$ and, in $U_{i j}= U_i \cap U_j$, $f_i^\alpha = g_{i j}^\alpha \, f_j^\alpha$,
\begin{equation}\label{G4}
\Psi\left(\sum\limits_\alpha\, \lambda^\alpha \, L^\alpha\right)  = \left[\sum\limits_\alpha\, \lambda^\alpha \, \frac{dg_{i j}^\alpha}{g_{i j}^\alpha}\right].
\end{equation}

Consider the kernel of $\Psi$. $\sum\limits_\alpha\, \lambda^\alpha \, L^\alpha \in \ker \Psi$ amounts to saying that, on each $U_i$, there are closed holomorphic 1-forms $\varpi_i$ such that in $U_{i j}$,
\begin{equation}\label{G5}
\sum\limits_\alpha\, \lambda^\alpha \, \frac{dg_{i j}^\alpha}{g_{i j}^\alpha}=
\varpi_j - \varpi_i.
\end{equation}
But this says that
\begin{equation}\label{G6}
\sum\limits_\alpha\, \lambda^\alpha \, \frac{df_{i}^\alpha}{f_{i}^\alpha} + \varpi_i = \sum\limits_\alpha\, \lambda^\alpha \, \frac{df_{j}^\alpha}{f_{j}^\alpha} + \varpi_j.
\end{equation}
Hence, these glue together to give a global closed meromorphic 1-form $\eta$ on $M$, defined up to summation of a global closed holomorphic 1-form $\rho$.

Remark that, since $L^\alpha$ is $\omega$-invariant and hence $(\omega \wedge  df_{i}^\alpha)_{\mid(f_{i}^\alpha=0)} \equiv 0$,  $\omega \wedge \eta$ is a holomorphic $r+1$-form, defined up to summation of $\omega \wedge \rho$, with $\rho$ a global closed holomorphic 1-form. It follows that the $\mathbb{C}$-linear map
\begin{equation}\label{G7}
\begin{array}{ccc}
\Theta : \ker \Psi & \longrightarrow & \mathrm{H}^{0}(M, \Omega^{r+1} \otimes \mathscr{L})/ \omega \wedge \mathrm{H}^{0}(M, \Omega_{cl}^1) \\
 \\
\sum\limits_\alpha\, \lambda^\alpha \,L^\alpha & \longmapsto & \overline{\omega \wedge \left(\eta + \rho\right)} \hskip 100pt
\end{array}
\end{equation}
is well-defined.

Consider the exact sequence

\begin{equation}\label{G8}
\xymatrix{ 0\ar[r]&\Omega^1 \ar[r] &
\mathscr{M}^1\ar[r]&\mathscr{Q}_{M}^1 \ar[r]&0 }
\end{equation}
where $\mathscr{M}^1$ is the sheaf of germs of meromorphic 1-forms on $M$ and $\mathscr{Q}_{M}^1$ is the quotient $\mathscr{M}^1/\Omega^1$. The following is a version of \cite[Lemme 3.1.1, p. 102]{J}; we repeat the proof here for the sake of completness.

\begin{lema}\label{jou} The $\mathbb{C}$-linear map
\begin{equation}\label{G9}
\begin{array}{ccc}
\mathrm{Div}(M, \F)\otimes \mathbb{C} & \longrightarrow & \mathscr{Q}_{M}^1  \\
  \\
\displaystyle\sum_{\alpha}\lambda^\alpha \cdot L^\alpha & \longmapsto & \displaystyle\sum_{\alpha}\lambda^\alpha\frac{df^{\alpha}_i}{f^{\alpha}_i}
\end{array}
\end{equation}
is injective provided the divisors have no common factor.
\end{lema}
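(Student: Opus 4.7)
The plan is a local residue calculation along each irreducible component. Suppose an element $D=\sum_\alpha \lambda^\alpha L^\alpha$ of $\mathrm{Div}(M,\F)\otimes\mathbb{C}$ is sent to zero, so that on each chart $U_i$ the meromorphic $1$-form
\[
\eta_i \;=\; \sum_\alpha \lambda^\alpha\,\frac{df_i^\alpha}{f_i^\alpha}
\]
represents $0$ in $\mathscr{Q}_{M}^1$, i.e.\ is actually holomorphic on $U_i$. The sum is finite, and the hypothesis that the divisors share no common factor means that the irreducible components entering with nonzero coefficient are pairwise distinct.

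Fix an index $\beta$ with $\lambda^\beta\ne 0$, assuming one exists (otherwise $D=0$ and we are done). Since no $L^\alpha$ with $\alpha\ne\beta$ shares an irreducible component with $L^\beta$, the set $L^\beta\cap\bigcup_{\alpha\ne\beta}L^\alpha$ is a proper analytic subset of $L^\beta$. We may therefore pick a smooth point $p$ of $L^\beta$ lying in some $U_i$ and avoided by all other $L^\alpha$. Choose local coordinates $(z_1,\ldots,z_n)$ centered at $p$ with $L^\beta\cap U_i=\{z_1=0\}$. Then $f_i^\beta=u\,z_1$ for some local unit $u$, and
\[
\frac{df_i^\beta}{f_i^\beta} \;=\; \frac{dz_1}{z_1}+\frac{du}{u},
\]
with $du/u$ holomorphic at $p$. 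For every $\alpha\ne\beta$ the germ $f_i^\alpha$ is a unit at $p$, so $df_i^\alpha/f_i^\alpha$ is holomorphic there as well. Combining,
\[
\eta_i \;=\; \lambda^\beta\,\frac{dz_1}{z_1}+\omega_0
\]
in a neighborhood of $p$, with $\omega_0$ a holomorphic germ. The assumed holomorphy of $\eta_i$ forces the polar residue $\lambda^\beta$ along $\{z_1=0\}$ to vanish, contradicting $\lambda^\beta\ne 0$.

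Applying this argument to every index $\beta$ yields $\lambda^\alpha=0$ for all $\alpha$, hence $D=0$ and the map is injective. The only non-routine step is producing a smooth point of $L^\beta$ disjoint from the other $L^\alpha$'s, which is exactly what the no-common-factor hypothesis supplies; the remainder is the standard logarithmic residue computation along a smooth irreducible divisor. Note also that the computation shows the map is independent of the chosen local defining equations $f_i^\alpha$, since passing to $v\,f_i^\alpha$ for a unit $v$ alters $df_i^\alpha/f_i^\alpha$ by the holomorphic term $dv/v$ and therefore does not change its class in $\mathscr{Q}_M^1$.
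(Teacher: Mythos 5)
Your proof is correct and follows essentially the same strategy as the paper's: pick a point on one component $L^\beta$ away from all the others and extract the coefficient $\lambda^\beta$ by a logarithmic residue computation, using that the image form is holomorphic. The only cosmetic difference is that you read off the polar part along $\{z_1=0\}$ in adapted coordinates at a smooth point, whereas the paper restricts to a transversal curve and takes a one-variable residue at the origin; both amount to the same local calculation.
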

\begin{proof}
Let $\lambda^{1}, \ldots ,\lambda^{s}\in \mathbb{C}$ and $f^{1}, \ldots
,f^{s}\in \mathrm{Div}(M,\F)$ be such that

\begin{equation}\label{G10}
\eta_i =\sum_{j=1}^{s}\lambda^{j}\frac{df^{j}_i}{f^{j}_i}=0\in \mathscr{Q}_{M}^1
\end{equation}
so that $\eta_i$ is holomorphic. In each $U_i$, set $V_i^j = \{f_i^j =0\}$
and choose a point
$$
p_{i}^j \in \displaystyle V_{i}^j\setminus \bigcup_{1\leq k\leq s
\atop_{j\neq k}}V_{i}^k.
$$
Now, let $\Gamma_{i}^j$ be a curve through $p_i^j$ not contained in
$V_{i}^j$ and $\gamma_i^j:\{z;\ |z|<1\}\rightarrow U_{i}$ be a parametrization of $\Gamma_{i}^j$
with $\gamma_i^j(0)=p_{i}^j$. Put $\displaystyle h_{i}^j=f_{i}^j\circ \gamma_i^j$. For all
$k=1, \ldots ,s,\, k\neq j$, the functions $\displaystyle\frac{({h_{i}^{k}})'}{h_{i}^k}$ are regular at $0$, $\displaystyle\frac{({h_{i}^{j}})'}{h_{i}^j}$ has a
simple pole at $0$ and $\mathrm{Res}\left(\displaystyle\frac{({h_{i}^{j}})'}{h_{i}^j},0\right)=\rho_i^j$,
which is   the multiplicity of
$0$ as a zero of $h_{i}^j$. On the other hand, it follows from
$(\ref{G10})$ that
\begin{equation}\label{G11}
\lambda_i^{1}\frac{({h_{i}^{1}})'}{h_{i}^1}+ \cdots +
\lambda_i^{j}\frac{({h_{i}^{j}})'}{h_{i}^j}+ \cdots+
\lambda_i^{s}\frac{({h_{i}^{s}})'}{h_{i}^s}=(\gamma_i^{j})^\ast\eta.
\end{equation}
Taking residues at  $0$ in $(\ref{G11})$ we get  $\lambda_i^{j}\rho_i^j=\mathrm{Res}\left((\gamma_i^j)^*\eta,0\right)=0$ and, as
 $\rho_i^j\neq0$, we have $\lambda_i^{j}=0$ for $j=1,\dots,s$.
\end{proof}

Suppose now that   $\F$ admits at least
\begin{equation}\label{G12}
\dim_{\mathbb{C}}H^{1}(M, \Omega_{cl}^1) + \dim_{\mathbb{C}}\left(H^{0}(M,
\Omega^{r+1} \otimes \mathscr{L}) / \omega \wedge H^{0}(M,
\Omega_{cl}^1)\right) + r + 1
\end{equation}
invariant irreducible analytic hypersurfaces.
From (\ref{G3}) and (\ref{G7}) we have
\begin{equation}\label{G13}
\dim_{\mathbb{C}}\ker \Psi \geq \dim_{\mathbb{C}}(\mathrm{H}^{0}(M,
\Omega^{r+1} \otimes \mathscr{L}) / \omega \wedge H^{0}(M,
\Omega_{cl}^1)) + r + 1.
\end{equation}
Hence
$\dim_{\mathbb{C}} \ker \Theta \geq r+1$ and $\ker \Theta$ is non trivial. Now, given a non zero element
 $x \in \ker \Theta$,
we can choose $L^{1}, \ldots ,L^{k}\in \mathrm{Div}(M, \F)$ and  $\lambda^{1},
\ldots , \lambda^{k} \in \mathbb{C}^{\ast}$ such that
\begin{itemize}
\item [$i)$]$x=
\displaystyle\sum_{\alpha = 1}^{k} \lambda^{\alpha} \, L^{\alpha} \in
\ker \Psi$ and $x$ is nonzero.
  \\
\item [$ii)$]$\displaystyle \Theta \left(\sum_{\alpha = 1}^{k} \lambda^{\alpha}
\, L^{\alpha}\right) = \overline{\,0} \in  H^{0}(M, \Omega^{r+1} \otimes
\mathscr{L}) / \omega \wedge H^{0}(M, \Omega_{cl}^1).$
\end{itemize}

Using (\ref{G6}) we have that there exists  $\mu =
(\mu_i) \in \mathcal{Z}^{0}(M, \Omega_{cl}^1)$ such that, in $U_i$,
\begin{equation}\label{G14}
\omega \wedge \left(\varpi_i + \sum_{\alpha = 1}^{k}
\lambda_{\alpha}\frac{df_{i}^{\alpha}}{f_{i}^{\alpha}}\right) = \omega
\wedge \mu_{i},
\end{equation}
which amounts to
\begin{equation}\label{G15}
\omega \wedge \left(\varpi_{i} - \mu_{i}+ \sum_{\alpha =
1}^{k} \lambda_{\alpha}\frac{df_{i}^{\alpha}}{f_{i}^{\alpha}}\right) = 0
\end{equation}
in each $U_i$. Thus we get a global closed meromorphic
$1$-form $\widetilde{\xi}$ with
\begin{equation}\label{G16}
\widetilde{\xi}_{|U_i}=\varpi_{i} - \mu_{i}+ \sum_{\alpha =
1}^{k} \lambda_{\alpha}\frac{df_{i}^{\alpha}}{f_{i}^{\alpha}}
\end{equation}
such that
\begin{equation}\label{G17}
\omega \wedge
\widetilde{\xi} = 0.
\end{equation}
In this way we can construct   $r+1>1$ global closed meromorphic $1$-forms $\widetilde{\xi}_{1}, \ldots
,\widetilde{\xi}_{r+1}$ on  $M$ such that
\begin{equation}\label{G18}
\omega \wedge
\widetilde{\xi}_{j} = 0\,, \qquad 1 \leq j \leq r+1.
\end{equation}
Define $\alpha_{1}=\widetilde{\xi}_{1} \wedge \cdots \wedge
\widetilde{\xi}_{r}$ and $\alpha_{2}=\widetilde{\xi}_{2} \wedge \cdots \wedge \widetilde{\xi}_{r+1}$ and remark that $|\alpha_{1}|_{\infty}\neq |\alpha_{2}|_{\infty}$, where $|\cdot|_{\infty}$ denotes the set of poles.
We have the following possibilities:
\\
\begin{enumerate}
  \item []\textbf{ Case 1}. $\alpha_{1} \neq 0$ and
$\alpha_{2} \neq 0.$\\
  \item [] \ \textbf{Case 2}. $\alpha_{1}=0$ or
$\alpha_{2}=0.$
\\
\end{enumerate}

\noindent \textbf{Case 1:} Since
$|\alpha_{1}|_{\infty}\neq |\alpha_{2}|_{\infty}$
we have  that
$\alpha_{1}$ and $\alpha_{2}$  are linearly independent
 over  $\mathbb{C}$. We claim there exist $R_1,\,R_2\in \mathscr{M}(M)$
such that  $\omega = R_{i}\,\alpha_{i}$, $i=1,2$. In fact, consider the $\mathscr{M}(M)$-basis $\{\widetilde{\xi}_1, \dots,  \widetilde{\xi}_{r+1}, \dots,\widetilde{\xi}_s\}$ of $\mathscr{M}^1$ obtained by completing
$\{\widetilde{\xi}_1, \dots, \widetilde{\xi}_{r+1}\}$ to a $\mathscr{M}(M)$-basis. Write
\begin{equation}\label{G19}
\omega=\sum_{1\leq i_1<\dots<i_r\leq s} R_{i_1,\dots,i_r}\;
\widetilde{\xi}_{i_1}\wedge\cdots\wedge \widetilde{\xi}_{i_r}.
\end{equation}
Since $\omega \wedge \widetilde{\xi}_j=0$, for $j=1,\dots,r+1$, we have $\omega= R_{1,\dots,r}\;
\widetilde{\xi}_{1}\wedge\cdots \wedge\widetilde{\xi}_{r}$ and $\omega= R_{2,\dots,r+1}\;
\widetilde{\xi}_{2}\wedge\cdots \wedge\widetilde{\xi}_{r+1}$. Note $R_1 = R_{1,\dots,r}$ and $R_2 = R_{2,\dots,r+1}$.
Hence,
$\alpha_{1}=\displaystyle\frac{R_{1}}{R_{2}}\,\alpha_{2}$. Since the meromorphic
$r$-forms $\alpha_{1}$ and $\alpha_{2}$ are closed we have
$0=d\left(\displaystyle\frac{R_{1}}{R_{2}}\right) \wedge \alpha_{2}$ and thus
$d\left(\displaystyle\frac{R_{1}}{R_{2}}\right) \wedge \omega=0$. Moreover, since
$\alpha_{1},\alpha_{2}$  are linearly independent  over  $\mathbb{C}$, $R=\displaystyle\frac{R_{1}}{R_{2}}$ is not constant.
This shows that the meromorphic function  $R$ is a first integral for $\omega$.\\

\noindent \textbf{Case 2:} Suppose  $\alpha_{1}=0$. Let  $m$ be the
largest integer such that $\widetilde{\xi}_{1}, \ldots ,\widetilde{\xi}_{m}$ are
linearly independent over  $\mathscr{M}(M)$. Then,
\begin{equation}\label{G20}
\widetilde{\xi}_{m+1}=\sum_{i=1}^{m}R_{i} \;
\widetilde{\xi}_{i}
\end{equation}
with $R_{1},\dots, R_m \in \mathscr{M}(M)$. Since $\widetilde{\xi}_{i}$ is  closed for  $i=1,\dots,m+1$, we
get
\begin{equation}\label{G21}
0=\sum_{i=1}^{m}dR_{i} \wedge \widetilde{\xi}_{i}.
\end{equation}
Then, for each $j=1,\dots,m$, multiplying (\ref{G21}) by $ \widetilde{\xi}_{1}
\wedge \cdots \wedge \widehat{\widetilde{\xi}_{j} }\wedge \cdots \wedge
\widetilde{\xi}_{m}$ we obtain

\begin{eqnarray*}
0&=&\sum_{i=1}^{m}dR_{i} \wedge \widetilde{\xi}_{i} \wedge \widetilde{\xi}_{1} \wedge
\cdots \wedge \widehat{\widetilde{\xi}_{j} }\wedge \cdots \wedge
\widetilde{\xi}_{m}\\
&=&(-1)^{j+1}dR_{j} \wedge \widetilde{\xi}_{1} \wedge \cdots \wedge \widetilde{\xi}_{m}.
\end{eqnarray*}\\

Since  $\widetilde{\xi}_{1}, \cdots ,\widetilde{\xi}_{m}$  are linearly independent
over  $\mathscr{M}(M)$, there exist $g_{1},
\cdots ,g_{m}$ $\in  \mathscr{M}(M)$ such that  $dR_{j}=\sum_{i=1}^{m}g_{i}
\; \widetilde{\xi}_{i}.$ By $(\ref{G18})$ we get
$$
dR_{j} \wedge \omega=\sum_{l=i}^{m}g_{i}
\; \widetilde{\xi}_{i} \wedge \omega=0.
$$
Besides, from $(\ref{G20})$ and lemma \ref{jou}
there exists $i_{0}\in \{1,\dots,m\}$ such that
$R_{i_{0}}$ is not constant. That is, $R_{i_{0}}$ is a meromorphic
first integral for  $\omega$. The case $\alpha_{2}=0$ is dealt with analogously.

\end{proof}

\section{JET BUNDLES AND EXTACTIC VARIETIES}\label{secaoext}

Throughout this section $\F$ denotes a (singular) holomorphic foliation of dimension $r$, on a connected complex manifold of dimension $n$. We assume $\F$ has a locally free tangent sheaf and that its singular locus has codimension greater than or equal to $2$. Hence, such an $\F$ is given by the data:
\begin{itemize}
  \item [i)] An open covering $\{U_{\alpha}\}_{\alpha\in \Lambda}$ of
  $M$.
  \\
  \item [ii)] An involutive system of holomorphic vector fields $\{X_{1,\alpha},\dots,X_{r, \alpha}\}$ on $U_{\alpha}$ such that
\begin{equation}\label{PI1}
  S_{\alpha}=\{p\in U_{\alpha}\; ; \; \left(X_{1,\alpha}\wedge\cdots \wedge
  X_{r,\alpha}\right) (p)=0\}
\end{equation}
  satisfies the condition $\mathrm{codim}_{\mathbb{C}} S_{\alpha} \geq 2.$
  \\
  \item [iii)] Whenever $U_{\alpha \beta }:=U_{\alpha  } \cap U_{ \beta }\neq \emptyset$,  there exists $g_{\alpha\beta}:=[g_{\alpha\beta}^{ij}]\in GL(r,\mathscr{O}(U_{\alpha \beta}))$
  such that
\begin{equation}\label{PI2}
\begin{array}{c}
X_{i,\alpha}=\sum\limits_{j=1}^r g_{\alpha\beta}^{ij}\;
X_{j,\beta},\; \mathrm{and\; hence} \\
\\
 X_{1,\alpha}\wedge\cdots \wedge
  X_{r,\alpha}=\det(g_{\alpha \beta}) \;X_{1,\beta}\wedge\cdots \wedge
  X_{r,\beta}\; \mathrm{in}\; U_{\alpha \beta }.
\end{array}
\end{equation}

\end{itemize}

In particular, $S_{\alpha} \cap U_{\alpha \beta }=S_{\beta} \cap
U_{\alpha \beta }$. Thus, the singular set
$\mathrm{Sing}(\F)=\bigcup_{\alpha}S_{\alpha}$ of $\F$ is an
analytic subset of $M$ of $\mathrm{codim}_{\mathbb{C}}
\mathrm{Sing}(\F) \geq 2$. Also, if  $T\F^*$ is the vector bundle
associated to the cocycles
$\left\{[g_{\alpha\beta}^{ij}]:=g_{\alpha\beta}\in
GL(r,\mathscr{O}(U_{\alpha \beta}))\right\}_{\alpha\in \Lambda}$
then, by (\ref{PI2}),  $\F$ induces a global holomorphic section of
$\bigwedge^r TM\otimes \det (T\F^*) $.

\begin{defi}
Let $\vartheta: \mathscr{G}\rightarrow \mathscr{H}$ be a map of coherent sheaves of $\mathscr{O}_M$-modules. Denote by $\vartheta(x): \mathscr{G}(x)\rightarrow \mathscr{H}(x)$ the corresponding linear map on stalks. The rank of $\vartheta$, denoted $\mathrm{rk}(\vartheta)$, is the largest integer $r$ such that $\bigwedge^r \vartheta\neq 0.$ If $x $ is a point in $M$, then the local rank of $\vartheta$  at $x$, $\mathrm{rk}_x(\vartheta)$, is the rank of the map $\vartheta(x)$.
\end{defi}

Before proceeding, a piece of notation. Consider a vector field $X$ as a derivation and, if $f$ is a function, we let $X^0(f)=f$ and
$X^m(f)= X(X^{m-1}(f))$, for $m>0$.

Let $H$ be a holomorphic line
bundle on $M$, consider  a finite dimensional linear system
$V\subset \mathrm{H}^{0}(M,H)$, of dimension $k \geq 1$, and take an
open covering $\{\mathcal{U}_{\alpha}\}_{\alpha\in \Lambda}$ of $M$
which trivializes both $H$ and $(T\F^*)^*=T\F$. In the open set
$\mathcal{U}_{\alpha}$ we can consider the map
$$
T^{(k)}_{\alpha}:V \otimes
\mathscr{O}_{\mathcal{U}_{\alpha}}\rightarrow
\mathscr{O}_{\mathcal{U}_{\alpha}}^{\oplus {k-1+r\choose r}}
$$
defined by
\begin{equation}\label{jet}
T^{(k)}_{\alpha}(s_{\alpha})=\sum_{\begin{array}{c}
i_1+\dots + i_r\leq k
\end{array}} X_{1,\alpha}^{i_1}\dots X_{r,\alpha}^{i_r}(s_{ \alpha})\cdot
\frac{t_1^{i_1}\dots t_r^{i_r}}{i_1!\dots i_r!},
\end{equation}
where $s_{ \alpha}$  and $\{X_{i,\alpha}\}_{i=1}^{r} $ are  local
representatives, respectively, of a section $s\in V \subset
\mathrm{H}^{0}(M,H)$ and of generators of $T\F$. Also, for each $ |I|=i_1+ \dots + i_r$, $0 \leq |I| \leq k$ we fix an order of appearance for the monomials $ t^{I} =t_1^{i_1}\dots t_r^{i_r}$. This is not relevant to the construction of the jet bundles, since different orderings will produce isomorphic bundles and, besides, the extatic variety (Definition \ref{extactic} below) remains unchanged as it is locally given by the vanishing of the $k \times k$ minors of (\ref{tk}).

In
$\mathcal{U}_{\alpha\beta}$ we have $s_{\alpha}=f_{\alpha\beta}
s_{\beta}$ and by (\ref{PI2}) and Leibniz's rule we get
\begin{equation}\label{transition}
\begin{array}{c}
X_{i,\alpha }(s_{\alpha})=\left[\displaystyle\sum_{j=1}^k g_{\alpha\beta}^{ij}\cdot
X_{j,\beta}(f_{\alpha \beta})\right] \cdot s_{\beta}+ f_{\alpha\beta} \cdot\displaystyle\sum_{j=1}^k g_{\alpha\beta}^{ij}\cdot
X_{j,\beta}(s_{\beta}).
\end{array}
\end{equation}
Iterating this procedure up to order ${k-1+r\choose r}$, where $k= \dim_{\mathbb{C}}V$, we
obtain

$$
  \left[
     \begin{array}{c}
       s_{\alpha}\\
       \\
       X_{\alpha}^{J_1}(s_{\alpha}) \\
       \\
     X_{\alpha}^{J_2}(s_{\alpha}) \\
       \\
       \vdots\\
       \\
    X_{\alpha}^{J_k}(s_{\alpha})
     \end{array}
   \right]=
$$

$$
=\left[
\!\!\begin{array}{ccccc}
f_{\alpha\beta} & 0 &\cdots & 0 & 0\\
\\
\left[\sum\limits_{j=1}^k g_{\alpha\beta}^{ij}\cdot
X_{j,\beta}(f_{\alpha \beta})\right]& \!\!\!\!f_{\alpha\beta} \cdot  g_{\alpha\beta}& \cdots
& 0 & 0 \\
         \\
          * & * & f_{\alpha\beta} \cdot g_{\alpha\beta}^2  & 0 & 0
           \\
           \\
         \vdots & \vdots & \vdots & \ddots & \vdots \\
         \\
        * &* & \cdots &
     *& \!\!\!f_{\alpha\beta} \cdot g_{\alpha\beta}^{{k-1+r\choose r}} \\
        \\
       \end{array}
     \!\!\right]\cdot\left[
     \!\!\!\!\begin{array}{c}
       s_{\beta}\\
       \\
      X_{\beta}^{J_1}(s_{\beta}) \\
       \\
      X_{\beta}^{J_2}(s_{\beta}) \\
       \\
       \vdots\\
       \\
        X_{\beta}^{J_k}(s_{\beta})\\
     \end{array}
   \!\!\!\!\right]
$$
where $ X_{\alpha}^{J_{\ell}}(\bullet )=(\cdots, X_{1,\alpha}^{j_1}\cdots X_{r,\alpha}^{j_r}(\bullet ), \cdots )^{\textsf{T}}$, with $|J_\ell|=: j_1+\cdots+j_r=\ell$ for all
$\ell=1,\dots,k.$
Denoting the $\left({k-1+r\choose r}+1\right) \times \left({k-1+r\choose r}+1 \right)$ matrix above by
$\Theta_{\alpha\beta}(\F,V)\in
GL\left({k-1+r\choose r}+1,\mathscr{O}_{\mathcal{U}_{\alpha\beta}}\right)$, a calculation shows that
$$
\left\{
\begin{array}{ll}
\Theta_{\alpha\beta}(\F,V)(p)\cdot\Theta_{\beta\alpha}(\F,V)(p)=I,\ \  $for all$ \ p\in \mathcal{U}_{\alpha \beta}\neq \emptyset\\
\\
\Theta_{\alpha\beta}(\F,V)(p)\cdot\Theta_{\beta\gamma}(\F,V)(p)\cdot\Theta_{\gamma\alpha}(\F,V)(p)=I,\
\ $for all$
\ p\in \mathcal{U}_{\alpha \beta \gamma}\neq\emptyset.\\
\end{array}
\right.
$$
That is, the family  of matrices
$\{\Theta_{\alpha\beta}(\F,V)\}_{\alpha, \beta \in \Lambda}$ defines the cocycles
of a vector bundle of rank ${k-1+r\choose r}+1$ on $M$ that we denote
by $J_{\F}^{k}V$. Now, using the trivializations
$\{\Theta_{\alpha\beta}(\F,V)\}_{\alpha,\beta \in \Lambda}$ we can glue the maps $T^{(k)}_{\alpha}$ and obtain
the morphism
\begin{equation}\label{mtk}
T^{(k)}: V \otimes \mathscr{O}_{M}\rightarrow J_{\F}^{k}V.
\end{equation}

\begin{obs}\label{esteves}
The referee of this article brought to our attention a work by E. Esteves \cite{Est} which was unknown to us,  where a more general construction of bundles and of sheaves of jets of foliations is carried out. In the case of holomorphic foliations the construction given in \cite{Est} coincide with the given by (\ref{mtk}) for $V$ a linear subspace of $H^0(M, \mathscr{O}_M)$. \hfill $\square$
\end{obs}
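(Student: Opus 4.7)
The goal is to verify the claim in Remark \ref{esteves}: that, when $V$ is a finite-dimensional linear subspace of $H^0(M,\mathscr{O}_M)$, Esteves' general construction of jet sheaves of a foliation from \cite{Est} specialises to the vector bundle $J_{\F}^{k}V$ and morphism $T^{(k)}$ built in (\ref{jet})--(\ref{mtk}). The plan is to set up both constructions locally on a cover $\{\mathcal{U}_{\alpha}\}_{\alpha \in \Lambda}$ that trivialises $T\F$, identify the transition cocycles, and then match universal evaluation maps.

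First I would recall Esteves' foliated principal parts sheaf $P^{k}_{\F}(\mathscr{O}_M)$ in the needed generality. On each chart where $T\F$ is generated by $X_{1,\alpha}, \dots, X_{r,\alpha}$, $P^{k}_{\F}(\mathscr{O}_M)$ is locally free with basis indexed by multi-indices $I=(i_1, \dots, i_r)$ of weight $|I| \leq k$; the universal foliated jet $j^{k}_{\F}\colon \mathscr{O}_M \to P^{k}_{\F}(\mathscr{O}_M)$ sends a section $s$ to the tuple $(X_{1,\alpha}^{i_1}\cdots X_{r,\alpha}^{i_r} s)_{|I|\leq k}$; and $P^{k}_{\F}(\mathscr{O}_M)$ carries a canonical symbol filtration whose $\ell$-th graded piece is $\mathrm{Sym}^{\ell}(T\F^{*})$.

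Next, I would compute the transition cocycles on $\mathcal{U}_{\alpha\beta}$. Using the change of generators $X_{i,\alpha}=\sum_{j} g_{\alpha\beta}^{ij}X_{j,\beta}$ from (\ref{PI2}) together with Leibniz's rule, induction on $|I|$ yields a block lower-triangular matrix whose $\ell$-th diagonal block is $\mathrm{Sym}^{\ell}(g_{\alpha\beta})$ and whose off-diagonal entries are determined by lower-order derivatives of the $g_{\alpha\beta}^{ij}$. Since $H=\mathscr{O}_M$ forces $f_{\alpha\beta}=1$ in the formulas following (\ref{transition}), this matrix is exactly the cocycle $\Theta_{\alpha\beta}(\F,V)$ defining $J_{\F}^{k}V$. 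Hence $P^{k}_{\F}(\mathscr{O}_M) \cong J_{\F}^{k}V$ as holomorphic vector bundles, and restricting $j^{k}_{\F}$ to the trivial subbundle $V\otimes \mathscr{O}_M$ recovers the local formula (\ref{jet}) for $T^{(k)}_{\alpha}$, hence the global morphism $T^{(k)}$ of (\ref{mtk}).

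The main obstacle is essentially notational: one must identify Esteves' coordinate-free symbol filtration, built from the powers of the ideal of the diagonal inside the leaves of $\F$, with the explicit $|I|$-weight grading used in our matrix presentation of $\Theta_{\alpha\beta}(\F,V)$. Once that identification is in place, the matching of the cocycles and of the universal evaluation is forced by the universal property of $j^{k}_{\F}$, and the verification reduces to the Leibniz computation sketched above.
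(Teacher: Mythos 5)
The first thing to say is that the paper offers no proof of Remark \ref{esteves} at all: it is an attribution remark, closed with a square, recording that the referee pointed the authors to Esteves' preprint \cite{Est} and asserting the coincidence of the two constructions without argument. There is therefore no proof in the paper to compare your proposal against, and what follows is an assessment of your outline on its own terms.

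Your strategy --- describe Esteves' foliated principal-parts sheaf locally on a cover trivialising $T\F$, compute the transition matrices by Leibniz's rule, observe that $H=\mathscr{O}_M$ forces $f_{\alpha\beta}=1$ so that the cocycle collapses to the one defining $J^{k}_{\F}V$, and then match the universal jet map with $T^{(k)}$ of (\ref{mtk}) --- is the natural and essentially the only way to verify the claim, and the Leibniz computation you invoke is literally the one the paper performs in (\ref{transition}) and the displayed matrix that follows it. Two caveats, however. First, your argument takes as given a specific local presentation of Esteves' sheaf (free with basis indexed by multi-indices of weight $\leq k$, with $\ell$-th graded piece $\mathrm{Sym}^{\ell}(T\F^{*})$); since the entire content of the remark is the comparison with \cite{Est}, that presentation would itself have to be extracted from Esteves' coordinate-free definition via powers of the ideal of the diagonal along the leaves, and that extraction is the real work here --- you label it ``essentially notational'' but do not carry it out. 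Second, watch the rank bookkeeping: the sheaf you describe has rank $\binom{k+r}{r}$, the number of multi-indices $I$ with $|I|\leq k$, whereas the paper declares $J^{k}_{\F}V$ to have rank $\binom{k-1+r}{r}+1$; these agree when $r=1$ but not for $r\geq 2$, so the asserted isomorphism with $J^{k}_{\F}V$ cannot be stated at face value without first pinning down which truncation of the jet tower the paper actually intends. Neither point is necessarily fatal, but both must be resolved before the outline becomes a verification of the remark.
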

Taking the $k$-th wedge product  of $T^{(k)}$ and recalling that $\dim_\mathbb{C}V =k$ we have the morphism
\begin{center}
$\bigwedge^k T^{(k)}:\bigwedge^kV\otimes
\mathscr{O}_{M}\rightarrow \bigwedge^k J_{\F}^{k-1}V$,
\end{center}
and tensorizing by $(\bigwedge^kV)^*$ we obtain a global section of
$\bigwedge^kJ_{\F}^{k-1}V\otimes (\bigwedge^kV)^*$ given by
\begin{equation}\label{extk}
\boldsymbol{\varepsilon}(\F,V): \mathscr{O}_{M}\rightarrow
\bigwedge^k J_{\F}^{k-1}V\otimes (\bigwedge^kV)^*.
\end{equation}

\begin{defi}\label{extactic}

The extactic variety of $\F$ with respect to the linear system
$V\subset  \mathrm{H}^{0}(M,H)$ is the locus of zeros
$\mathcal{E}(\F,V)$
of  the section
\begin{center}
$\boldsymbol{\varepsilon}(\F,V) \in
\mathrm{H}^{0}\left(M,\bigwedge^kJ_{\F}^{k-1}V\otimes
(\bigwedge^kV)^*\right).$
\end{center}
The section $\boldsymbol{\varepsilon}(\F,V)$ is the extactic
section of $\F$ with respect to $V$.

\end{defi}

Considering the local expression of $ T^{(k)}$ applied to a basis of $V$, say $\{s_1, \dots, s_k\}$, we have the matrix
\begin{equation}\label{tk}
\left[\begin{array}{cccc}
  s_1^{\alpha}& s_2^{\alpha} & \cdots & s_{k}^{\alpha}\\
  \\
   X_{1,\alpha}(s_1^{\alpha}) &X_{1,\alpha}(s_2^{\alpha}) & \cdots &X_{1,\alpha}(s_{k}^{\alpha})\\
   \\
   X_{2,\alpha}(s_1^{\alpha}) &X_{2,\alpha}(s_2^{\alpha}) & \cdots &
X_{2,\alpha}(s_{k}^{\alpha})
\\
  \vdots & \vdots & \ddots & \vdots\\
  \\
X_{r,\alpha}(s_1^{\alpha}) &X_{r,\alpha}(s_2^{\alpha}) & \cdots &
X_{r,\alpha}(s_{k}^{\alpha})
\\
  \vdots & \vdots & \ddots & \vdots\\
  \\
X_{\alpha}^{J_\ell}(s_1^{\alpha}) &X_{\alpha}^{J_\ell}(s_2^{\alpha}) & \cdots &
X_{\alpha}^{J_\ell}(s_{k}^{\alpha})\\
\vdots & \vdots & \ddots & \vdots\\
  \\
X_{\alpha}^{J_{k-1}}(s_1^{\alpha}) &X_{\alpha}^{J_{k-1}}(s_2^{\alpha}) & \cdots &
X_{\alpha}^{J_{k-1}}(s_{k}^{\alpha})
\end{array}
\right],
\end{equation}
where $ X_{\alpha}^{J_{\ell}}(s_i^{\alpha})=(\cdots, X_{1,\alpha}^{j_1}\cdots X_{r,\alpha}^{j_r}(s_i^{\alpha} ), \cdots )^{\textsf{T}}$, with $|J_\ell|=\ell $  and $s_i^{\alpha}$ is the local representation of the section $s_i$, $i=1,\dots,k$. It follows that the local expression of
the section $\boldsymbol{\varepsilon}(\F,V)$ is given by the determinants of the $k \times k$ minors of (\ref{tk}):
\begin{equation}\label{exsec}
\left( \dots, \det\left[ X_{\alpha}^{J}(s_j^{\alpha})\right], \dots\right)  \quad 0 \leq |J| \leq k-1, \quad 1\leq j\leq k.
\end{equation}
and the local defining equations of $\mathcal{E}(\F,V)$ are
\begin{equation}\label{exeq}
\det\left[ X_{\alpha}^{J}(s_j^{\alpha})\right] =0, \quad 0 \leq |J| \leq k-1, \quad 1\leq j\leq k.
\end{equation}

The simplest situation is that of a single vector field. In this case the extactic $\boldsymbol{\varepsilon}(\F,V)$, where $\F$ is the one-dimensional foliation induced by $X$ is given by
\begin{equation}\label{tkx}
\boldsymbol{\varepsilon}(\F, V) =\det \left[\begin{array}{cccc}
  s_1^{\alpha}& s_2^{\alpha} & \cdots & s_{k}^{\alpha}\\
  \\
   X_{\alpha}(s_1^{\alpha}) &X_{\alpha}(s_2^{\alpha}) & \cdots &X_{\alpha}(s_{k}^{\alpha})\\
   \\
   X^2_{\alpha}(s_1^{\alpha}) &X^2_{\alpha}(s_2^{\alpha}) & \cdots &
X^2_{\alpha}(s_{k}^{\alpha})
\\
\vdots & \vdots & \ddots & \vdots
\\
X^{k-1}_{\alpha}(s_1^{\alpha}) &X^{k-1}_{\alpha}(s_2^{\alpha}) & \cdots &
X^{k-1}_{\alpha}(s_{k}^{\alpha})
\end{array}
\right]
\end{equation}
where $\{ s_1, \dots, s_k\}$ is a basis of $V$. The following results elucidate the role of the extactics in this case.

\begin{prop}\label{jvpp}\cite{JVPF} Let $\F$ be a one-dimensional holomorphic foliation on a complex manifold $M$. If $V$ is a finite dimensional linear system, then every $\F$-invariant divisor in $V$ must be contained in the extactic divisor $\mathcal{E}(\F,V)$.
\end{prop}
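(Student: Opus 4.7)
The plan is to show that if $D=\{f=0\}$ is an $\F$-invariant prime divisor defined by a section $f\in V$, then $f$ divides the local determinantal expression (\ref{tkx}) of the extactic section, which forces $D\subset \mathcal{E}(\F,V)$; for a general $\F$-invariant divisor in $V$ one then sums over prime components.

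First I would exploit the base-independence of the extactic: since a change of basis on $V$ multiplies $\boldsymbol{\varepsilon}(\F,V)$ by the (nonzero constant) determinant of the transition matrix, the zero scheme $\mathcal{E}(\F,V)$ is independent of the particular basis used to write down (\ref{tkx}). I may therefore complete $f$ to a basis $\{s_1=f,s_2,\dots,s_k\}$ of $V$, so that the first column of the matrix in (\ref{tkx}) becomes $(f,X(f),X^2(f),\dots,X^{k-1}(f))^{\textsf{T}}$, where $X$ denotes a local generator of $T\F$.

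Next I would translate $\F$-invariance into an algebraic identity. Because $D$ is prime and $\F$-invariant, $X(f)$ vanishes along $D$, so the Nullstellensatz yields a local holomorphic function $g$ with $X(f)=g\cdot f$. A direct induction based on Leibniz's rule,
\begin{equation*}
X^{j+1}(f)=X(g_j f)=X(g_j)\,f+g_j X(f)=(X(g_j)+g_j\,g)\,f,
\end{equation*}
produces holomorphic functions $g_0=1$, $g_1=g$, $g_{j+1}=X(g_j)+g_j\,g$, satisfying $X^j(f)=g_j\cdot f$ for every $j\geq 0$. Substituting these into the first column of (\ref{tkx}) and using multilinearity of the determinant factors out $f$, so that locally $\boldsymbol{\varepsilon}(\F,V)=f\cdot \Delta$ for a holomorphic function $\Delta$. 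Hence $D\subset\mathcal{E}(\F,V)$.

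The only delicate point, and thus the main ``obstacle,'' is the passage from set-theoretic $\F$-invariance of $D$ to the holomorphic identity $X(f)=g\cdot f$ with $g$ holomorphic: it uses that $f$ is a reduced local equation of a prime divisor, so that the ideal sheaf of $D$ is principal and generated by $f$. Once this step is justified, the rest is multilinear algebra on the matrix (\ref{tkx}), and handling a reducible invariant divisor in $V$ amounts to applying the argument to each of its prime components.
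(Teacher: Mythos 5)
Your argument is correct and is essentially the approach the paper takes: Proposition \ref{jvpp} is quoted from \cite{JVPF} without proof, but the paper's own proof of its higher-dimensional counterpart, Proposition \ref{propext}, uses exactly your device of completing the defining section(s) to a basis of $V$ and observing that $\F$-invariance forces the corresponding column(s) of the matrix (\ref{tk}) to lie in the ideal of the invariant set, hence (in the hypersurface case) to be divisible by $f$. Your explicit induction $X^{j}(f)=g_{j}\,f$ and the factorization of $f$ out of the first column of (\ref{tkx}) is precisely the one-dimensional specialization of that computation.
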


\begin{teo}\label{jvpf} \cite[Theorem 3]{JVPF} Let $\F$ be a one-dimensional holomorphic foliation on a complex manifold $M$. If $V$ is a finite dimensional linear system such that $\boldsymbol{\varepsilon}(\F, V)$ vanishes identically, then there exists an open and dense set $U$, possibly intersecting the singular set of $\F$, where $\F_{|U}$ admits a holomorphic first integral. Moreover, if $M$ is a projective variety, then $\F$ admits a meromorphic first integral.
\end{teo}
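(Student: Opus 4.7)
The plan is to translate the global vanishing of $\boldsymbol{\varepsilon}(\F,V)$ into a classical Wronskian identity in flow-box coordinates, and then apply the parametric Wronskian lemma. Cover $M\setminus\mathrm{Sing}(\F)$ by open sets $\mathcal{U}_\alpha$ which trivialize $H$ and on which the local generator $X_\alpha$ of $T\F$ is rectified to $\partial/\partial t$ in coordinates $(t,z_2,\ldots,z_n)$. In such a chart the basis $\{s_1,\ldots,s_k\}$ of $V$ is represented by holomorphic functions $f_1,\ldots,f_k$, and formula (\ref{tkx}) shows that, up to the non-vanishing trivialization factor, the extactic section coincides with the Wronskian
\[
W=\det\bigl[\partial^{j-1}f_i/\partial t^{j-1}\bigr]_{1\leq i,j\leq k}.
\]
The hypothesis $\boldsymbol{\varepsilon}(\F,V)\equiv 0$ is therefore equivalent to $W\equiv 0$ in every chart.

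Next, I would invoke the classical parametric Wronskian criterion: $W\equiv 0$ implies that $f_1,\ldots,f_k$ are linearly dependent over the ring of holomorphic functions of $z=(z_2,\ldots,z_n)$ alone, i.e., over the local first integrals of $X_\alpha$. After possibly re-indexing, choose the minimal $k'\leq k$ with $W(f_1,\ldots,f_{k'})\equiv 0$ but $W(f_1,\ldots,f_{k'-1})\not\equiv 0$. On the dense open set where the $(k'-1)\times(k'-1)$ sub-Wronskian is invertible, Cramer's rule produces meromorphic $g_1,\ldots,g_{k'-1}$ with $\sum_{i<k'}g_if_i=f_{k'}$; differentiating this identity repeatedly and exploiting the vanishing of $W(f_1,\ldots,f_{k'})$ at the top order forces $\partial_t g_i\equiv 0$ for every $i$, so each $g_i$ is a local first integral of $X_\alpha$. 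Since $\{s_1,\ldots,s_k\}$ is globally $\mathbb{C}$-linearly independent (being a basis of $V$), the $f_i$ are $\mathbb{C}$-linearly independent on $\mathcal{U}_\alpha$ as well, so not all $g_i$ can be constant; at least one provides a non-constant holomorphic first integral of $\F$ on $\mathcal{U}_\alpha$. Taking $U$ to be the union of these charts proves the first assertion.

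The main obstacle, and the point of the projective statement, is assembling these local first integrals into a single global meromorphic first integral on $M$. The observation is that the coefficients $g_i$ are intrinsically ratios of $(k'-1)\times(k'-1)$ minors of the matrix (\ref{tk}): explicitly, $g_i=(-1)^{k'-i}\det M_i/\det M_{k'}$, where $M_j$ is the minor obtained by deleting the $j$-th column of the top $k'-1$ rows of (\ref{tk}) restricted to the sub-system $\langle s_1,\ldots,s_{k'}\rangle$. Under the cocycles defining $J_\F^{k-1}V$, both numerator and denominator transform by the same scalar factor built from the transition functions of $H$ and $T\F$, so each ratio $g_i$ is a globally well-defined meromorphic function on $M$. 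The relation $X(g_i)\equiv 0$ holds in every chart and hence throughout $M\setminus\mathrm{Sing}(\F)$, so any non-constant $g_i$ is a meromorphic first integral of $\F$; on projective $M$, GAGA promotes it to a rational first integral, completing the proof.
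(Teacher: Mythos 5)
Your argument is sound, but there is nothing in the paper to compare it against: Theorem \ref{jvpf} is quoted from Pereira \cite[Theorem 3]{JVPF} and used as a black box, with no proof supplied. What you have written is, in substance, the $r=1$ case of the paper's own proof of Theorem \ref{integral2}, recast in flow-box coordinates: your minimal $k'$ is the paper's minimal $m$ in (\ref{extat1}); your derivation of $\partial_t g_i\equiv 0$ from minimality is the computation (\ref{extatica2}); and your observation that the $g_i$, as ratios of minors of (\ref{tk}) built from the same rows, transform identically under the cocycle $\Theta_{\alpha\beta}(\F,V)$ and hence glue, is the step (\ref{uabint})--(\ref{globint}). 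Two remarks. First, your route in fact proves the stronger Theorem \ref{mbcjr}: projectivity is never used (the appeal to GAGA is superfluous, since the statement only asks for a meromorphic, not rational, first integral), and you obtain a single global meromorphic first integral on any connected $M$, not merely chartwise holomorphic ones on a dense open set --- this is exactly the ``slight improvement'' the paper attributes to \cite{MBCJr}. Second, two points are under-justified, though neither is a gap: the choice of $k'$ and the reindexing must be made globally, which is legitimate because the non-vanishing of a sub-Wronskian is the non-vanishing of the extactic section of a subsystem of $V$ and is therefore chart-independent; and the claim that numerator and denominator of $g_i$ pick up the same factor holds precisely because your row set (derivatives of order $0$ through $k'-2$) is an initial segment, so the lower-triangular cocycle $\Theta_{\alpha\beta}(\F,V)$ acts on those rows through its top-left block alone, contributing a determinant independent of the choice of columns.
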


This last result can be slightly improved as follows:

\begin{teo}\label{mbcjr} \cite[Theorem 1.2]{MBCJr} Let $\F$ be a one-dimensional holomorphic foliation on a complex manifold
 $M$ and let $V$ be a finite dimensional linear system on $M$. If $\boldsymbol{\varepsilon}(\F, V)$ vanishes identically then $\F$ admits a meromorphic first integral.
\end{teo}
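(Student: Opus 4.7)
The plan is to interpret the rank hypothesis as the vanishing of the extactic section $\boldsymbol{\varepsilon}(\F,V)$, extract from it a meromorphic linear relation among the sections of $V$ whose coefficients are forced to be first integrals, and then assemble these coefficients into the desired meromorphic map. Since $\dim_{\mathbb{C}}V=k$, the hypothesis $\mathrm{rk}\,T^{(k)}<k$ is equivalent to $\bigwedge^{k}T^{(k)}\equiv 0$, i.e.\ to $\boldsymbol{\varepsilon}(\F,V)\equiv 0$; concretely, every $k\times k$ minor of the local matrix \eqref{tk} vanishes, so its columns are $\mathscr{M}(M)$-linearly dependent. I then replace $V$ by a subspace $V'\subset V$ of minimum dimension $k'\le k$ for which $\boldsymbol{\varepsilon}(\F,V')\equiv 0$, and fix an ordered basis $s_{1},\ldots,s_{k'}$ of $V'$. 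Minimality forces $\boldsymbol{\varepsilon}(\F,V'')\not\equiv 0$ for $V''=\mathrm{span}(s_{1},\ldots,s_{k'-1})$, so the meromorphic kernel of the $T^{(k')}$-matrix is exactly one-dimensional, and I normalize a generator to $(c_{1},\ldots,c_{k'-1},1)$ with $c_{j}\in\mathscr{M}(M)$.

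The central step is to prove that each $c_{j}$ is a meromorphic first integral of $\F$. For any local generator $X_{i}$ of $T\F$ and any multi-index $J$ with $|J|\le k'-2$, applying $X_{i}$ to the identity $\sum_{j}c_{j}X^{J}(s_{j})=0$ gives
\[
\sum_{j}X_{i}(c_{j})\,X^{J}(s_{j})\;=\;-\sum_{j}c_{j}\,X_{i}X^{J}(s_{j}).
\]
By involutivity of $T\F$ one has $[X_{i},X_{l}]=\sum_{m}a_{il}^{m}X_{m}$, so iterating this commutator relation rewrites $X_{i}X^{J}$ as an $\mathscr{O}_{M}$-linear combination of ordered monomial operators $X^{J'}$ with $|J'|\le |J|+1\le k'-1$. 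Each term $\sum_{j}c_{j}X^{J'}(s_{j})$ is annihilated by the rank hypothesis, so the right-hand side vanishes. Combined with $X_{i}(c_{k'})=X_{i}(1)=0$, this shows that $(X_{i}(c_{1}),\ldots,X_{i}(c_{k'-1}))$ lies in the kernel of the full-rank matrix associated to $V''$, forcing $X_{i}(c_{j})=0$ for every $i,j$. Minimality of $V'$ ensures that at least one $c_{j}$ is non-constant, yielding the desired meromorphic first integral.

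For the moreover statement, when $M$ is compact the functions $c_{j}$ assemble into a meromorphic map
\[
\varphi=(c_{1}:\cdots:c_{k'-1}:1)\colon M\dashrightarrow\mathbb{P}^{k'-1},
\]
whose fibers, being common level sets of $\F$-invariant meromorphic functions, are $\F$-invariant. Compactness of $M$ guarantees that the image closure $N\subset\mathbb{P}^{k'-1}$ is a projective algebraic subvariety; its dimension $m=m(k,r)$ is at most $k-1$ and is controlled by the uniform combinatorics of the jet construction, so it depends only on $k$ and $r$. The delicate point I expect to be the normal-form step: one must verify that every reordering of $X_{i}X^{J}$ by iterated brackets keeps the resulting monomial operators within weight $\le k'-1$, so that the rank hypothesis applies to each term, and this is precisely where the involutivity of $T\F$ is indispensable.
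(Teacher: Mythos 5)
Your argument is correct and is essentially the paper's own route: the paper states this result as a citation of \cite{MBCJr} and proves its higher-dimensional generalization, Theorem \ref{integral2}, by exactly the scheme you use --- a minimal meromorphic dependence among the columns of the jet matrix \eqref{tk}, normalized so the last coefficient is $1$, differentiated by the generating vector field(s), with minimality forcing the coefficients to be first integrals and the $\mathbb{C}$-independence of the $s_j$ forcing at least one coefficient to be non-constant. Two minor remarks: for the one-dimensional statement actually at issue the normal-ordering step you flag as delicate is vacuous (a single $X$ gives $X\,X^{j}=X^{j+1}$, so no brackets are needed), and the ``moreover'' clause you prove belongs to Theorem \ref{integral2} rather than to this statement --- there the paper proceeds differently, via the algebraic reduction theorem applied to the subfield of first integrals, which also yields that $N$ may be taken to be a manifold, a point your image-closure construction leaves open.
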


We can then make the following

\begin{obs}\label{exfield} Let $\F$ be a one-dimensional holomorphic foliation on a complex manifold $M$ and $V$ a finite dimensional linear system on $M$. If $\F$ does not admit a meromorphic first integral, then $\boldsymbol{\varepsilon}(\F, V) \not\equiv 0$.
\end{obs}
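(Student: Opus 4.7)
The plan is extremely short: the statement is nothing more than the contrapositive reformulation of Theorem \ref{mbcjr}. Theorem \ref{mbcjr} asserts that if $\boldsymbol{\varepsilon}(\F, V) \equiv 0$ then $\F$ admits a meromorphic first integral; the logically equivalent contrapositive is precisely that if $\F$ has no meromorphic first integral then $\boldsymbol{\varepsilon}(\F, V) \not\equiv 0$. So I would just record the equivalence and cite the theorem.

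The hypotheses in the remark match those of Theorem \ref{mbcjr} verbatim (one-dimensional holomorphic foliation on a complex manifold $M$, finite dimensional linear system $V$), so no extra verifications or restrictions on $M$ are needed. There is genuinely no additional technical content; the only conceivable pitfall is forgetting that Theorem \ref{mbcjr} already improved Theorem \ref{jvpf} by removing the projectivity hypothesis, so that the contrapositive is valid in full generality. Once that is noted, the remark is immediate.

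For self-containedness one could alternatively proceed directly: assume $\F$ has no meromorphic first integral and suppose for contradiction that $\boldsymbol{\varepsilon}(\F, V) \equiv 0$; then Theorem \ref{mbcjr} (or, via the weaker route, the combination of Proposition \ref{jvpp} with the construction in the proof of Theorem \ref{jvpf}) produces a non-constant meromorphic first integral, contradicting the hypothesis. This would be the form I would write into the paper, since it makes the logical structure visible without any new computation. No genuine obstacle is expected.
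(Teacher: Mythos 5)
Your proposal is correct and matches the paper exactly: the paper states this remark immediately after Theorem \ref{mbcjr} with no proof, precisely because it is the contrapositive of that theorem. Nothing further is needed.
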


Now we give counterparts of Proposition \ref{jvpp} and Theorems \ref{jvpf} and \ref{mbcjr} for higher dimensional foliations.

\begin{prop}\label{propext} Let $\F$ be a  holomorphic foliation of dimension $r$
on a complex manifold $M$ of dimension $n$. If $V$ is a linear
system of dimension $k$, then every $\F$-invariant complete intersection of  elements  of $V$ is contained in  $\mathcal{E}(\F,V)$.
\end{prop}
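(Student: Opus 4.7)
My plan is to mimic, step by step, the proof template used for the one-dimensional case (Proposition~\ref{jvpp}): the key idea is to pick a basis of $V$ adapted to the complete intersection, so that $\F$-invariance forces whole columns of the matrix~(\ref{tk}) to vanish along $Z$, and hence every $k\times k$ minor does too.

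More explicitly, let $Z$ be an $\F$-invariant complete intersection cut out by $\sigma_{1},\dots,\sigma_{\ell}\in V$. I first complete $\{\sigma_{1},\dots,\sigma_{\ell}\}$ to a $\mathbb{C}$-basis $\{s_{1}=\sigma_{1},\dots,s_{\ell}=\sigma_{\ell},s_{\ell+1},\dots,s_{k}\}$ of $V$. With respect to this basis, the local expression~(\ref{tk}) of the extactic section has its first $\ell$ columns built from iterated derivatives $X^{J}_{\alpha}(f_{j}^{\alpha})$ of the local representatives $f_{j}^{\alpha}$ of $s_{j}=\sigma_{j}$, for $1\leq j\leq \ell$. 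I claim that each such entry vanishes on $Z$.

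The claim reduces to the ideal-theoretic statement $X^{J}_{\alpha}(f_{j}^{\alpha})\in (f_{1}^{\alpha},\dots,f_{\ell}^{\alpha})$ for all multi-indices $J$ with $|J|\leq k-1$ and all $j\leq \ell$. The case $|J|=0$ is trivial. $\F$-invariance of $Z$ means that each local generator $X_{i,\alpha}$ of $T\F$ is tangent to $Z$; because $Z$ is a complete intersection (locally cut out by a regular sequence), tangency translates into preservation of the defining ideal, which gives the case $|J|=1$. The general case follows by induction on $|J|$ together with the Leibniz rule: a derivation $X_{i,\alpha}$ applied to a representative of an element of $(f_{1}^{\alpha},\dots,f_{\ell}^{\alpha})$ stays in that ideal, since the generators $f_{j}^{\alpha}$ themselves are sent into the ideal. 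Once this is established, restricting~(\ref{tk}) to $Z\cap U_{\alpha}$ kills its first $\ell\geq 1$ columns, so every $k\times k$ minor vanishes on $Z$, giving $Z\subset \mathcal{E}(\F,V)$.

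I do not expect any serious obstacle; the only point that merits a careful line in a write-up is the implication \emph{tangency of the $X_{i,\alpha}$ to $Z$} $\Longrightarrow$ \emph{preservation of the ideal $(f_{1}^{\alpha},\dots,f_{\ell}^{\alpha})$}, which is the standard statement for complete intersections defined by regular sequences and can be invoked as such. Note that the argument goes through verbatim for $r=1$ and $\ell=1$, recovering Proposition~\ref{jvpp}.
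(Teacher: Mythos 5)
Your proposal is correct and follows essentially the same route as the paper's proof: complete the defining sections of the complete intersection to a basis of $V$, use invariance to show each iterated derivative $X_\alpha^J(s_j^\alpha)$ lies in the ideal $\mathcal{I}(s_1^\alpha,\dots,s_\ell^\alpha)$, and conclude that every $k\times k$ minor of~(\ref{tk}) vanishes along the variety. Your explicit induction on $|J|$ via the Leibniz rule just spells out a step the paper leaves implicit.
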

\begin{proof}
 Every complete intersection of  elements  of $V$ is the intersection
of the zeros of  $\ell$ $\mathbb{C}$-linearly
independent sections, say  $s_1,\dots, s_\ell \in V$. Then
we can, if necessary, complete this family to a basis of $V$, say
$$\{s_1,\dots,s_\ell, s_{\ell+1},\dots s_{k}\}.$$ Consider in each $\mathcal{U}_{\alpha}$ the representation $s_i^{\alpha}$ of the section $s_i$.
If $Z(s_1^{\alpha},\dots, s_\ell^{\alpha})$ is $\F$-invariant, then
$$X_{i,\alpha}(s_j^{\alpha})\in \mathcal{I}(s_1^{\alpha}, \ldots ,s_{\ell}^{\alpha}),\ \ j=1, \dots, \ell \, ; \, i=1,\ldots,r$$
and so we get $  X_{\alpha}^{J}(s_j^{\alpha})\in \mathcal{I}(s_1^{\alpha}, \ldots ,s_{\ell}^{\alpha})$, for $j=1, \dots, \ell$ and any $J$. Hence, in $\mathcal{U}_{\alpha}$ the determinants
$$
\det\left[ X_{\alpha}^{J}(s_j^{\alpha})\right], \quad 0 \leq |J| \leq k-1, \quad 1\leq j\leq k.
$$
do all vanish along $Z(s_1^{\alpha},\dots, s_\ell^{\alpha})$.
\end{proof}

\section{PROOF OF THEOREM \ref{integral2}}

\noindent\textbf{Theorem 1.2.}\emph{ Let $\F$ be a  holomorphic
foliation of dimension $r$, with locally free tangent sheaf, on a
complex manifold $M$ and  let $V$ be a finite dimensional linear
system on $M$ with $\dim_\mathbb{C} V = k$. If
$\mathrm{rk}\,T^{(k)}< k$,  then $\F$ has a meromorphic first
integral. If $M$ is compact, then there exist an algebraic manifold
$N$, of dimension $m=m(k,r)$, and a meromorphic map
$$\varphi:M\dashrightarrow N,$$ such that the fibers of $\varphi$ are $\F$-invariant.
}

\begin{proof}

Let $\{s_1, \dots, s_k \}$ be a $\mathbb{C}$-basis for $ V$ and suppose
$\mathrm{rank}\, T^{(k)}< k$. This means that the columns of the matrix (\ref{tk}) are dependent over the  field of meromorphic functions $\mathscr{M}(\mathcal{U}_{\alpha})$. Hence
there are meromorphic functions
$\theta_{1}^{\alpha},\dots,\theta_{k}^{\alpha}$ in
$\mathcal{U}_{\alpha}$ such that
\begin{equation}\label{extat}
M_i^{\alpha}=\displaystyle\sum_{j=1}^{k}
\theta_j^{\alpha}X_{i,\alpha}(s_j^{\alpha})=0, \qquad 0\leq i\leq r.
\end{equation}
Now, let $m$ be the smallest integer with the property that
there exist meromorphic  functions
$\theta_1^{\alpha},\dots,\theta_{m}^{\alpha}$ and
$s_1^{\alpha},\dots,s_{m}^{\alpha}\in V$, linearly
independent over $\mathbb{C}$, such that $(\ref{extat})$ holds, that is, $m$ is the rank of $T^{(k)}$ over the field $\mathscr{M}(M)$ of meromorphic functions in $M$. Hence,
\begin{equation}\label{extat1}
\begin{array}{c}
\widetilde{M}_0^{\alpha}= \displaystyle\sum_{j=1}^{m}
\theta_j^{\alpha}s_j^{\alpha}=0, \hfill\\
\,\\
\widetilde{M}_i^{\alpha}=\displaystyle\sum_{j=1}^{m}
\theta_j^{\alpha}X_{i,\alpha}(s_j^{\alpha})=0, \qquad 1\leq i\leq r.
\end{array}
\end{equation}

We have $1 < m \leq k$ and we may assume
$\theta_{m}^{\alpha}=1$. Applying the derivation
$X_{i,\alpha}$ to $\widetilde{M}_0^{\alpha}$ we get, for $1\leq i \leq r$,
\begin{equation}\label{extatica2}
\begin{array}{c}
X_{i,\alpha}(\widetilde{M}_0^{\alpha})= \hfill\\
\, \\
=X_{i,\alpha}(\theta_1^{\alpha})s_1^{\alpha}+\cdots+X_{i,\alpha}(\theta^\alpha_{m-1})s_{m-1}^{\alpha}+
\underbrace{X_{i,\alpha}(\theta^\alpha_{m})}_{=\,0}s_{m}
^{\alpha}+ \widetilde{M}_i^{\alpha}=\\
= X_{i,\alpha}(\theta_1^{\alpha})s_1^{\alpha}+\cdots+
X_{i,\alpha}(\theta^\alpha_{m-1})s_{m-1}
^{\alpha}=0. \hfill
\end{array}
\end{equation}
The minimality of $m$ assures that
$X_{i,\alpha}(\theta_1^{\alpha})=\cdots=
X_{i,\alpha}(\theta_{m-1}^{\alpha})=0$ and hence we have at least one meromorphic first integral common to the
$X_{i,\alpha}$ in $\mathcal{U}_{\alpha}$, $1 \leq i \leq r$, provided
the $\theta_j^{\alpha}$ are not all constants. This is the case since
\begin{equation}\label{uaint}
M_0^{\alpha}=\theta_1^{\alpha}s_1^{\alpha}+\cdots+
\theta_{m-1}^{\alpha}s_{m-1}^{\alpha}+s_{m}^{\alpha}=0
\end{equation}
and $s_1^{\alpha},\dots,s_{m}^{\alpha}\in V$ are linearly
independent over $\mathbb{C}$.

In $\mathcal{U}_{\alpha\beta}$ we have, since $s_j^\beta =f_{\beta\alpha}s_j^{\alpha}$,

\begin{equation}\label{uabint}
\begin{array}{c}
0= \theta_1^{\beta}s_1^{\beta}+\cdots+
\theta_{m-1}^{\beta}s_{m-1}^{\beta}+s_{m}^{\beta}= \hfill\\
  \, \\
=\theta_1^{\beta}f_{\beta\alpha}s_1^{\alpha}+\cdots+
\theta_{m-1}^{\beta}f_{\beta\alpha}s_{m-1}^{\alpha}+f_{\beta\alpha}s_{m}^{\alpha}=\\
\, \\
=f_{\beta\alpha}(\theta_1^{\beta}s_1^{\alpha}+\cdots+
\theta_{m-1}^{\beta}s_{m-1}^{\alpha}+s_{m}^{\alpha}),\hfill
\end{array}
\end{equation}
which amounts to
\begin{equation}\label{ubint}
\theta_1^{\beta}s_1^{\alpha}+\cdots+
\theta_{m-1}^{\beta}s_{m-1}^{\alpha}+s_{m}^{\alpha}=0.
\end{equation}
Taking the difference between (\ref{uaint}) and (\ref{ubint}) we have
\begin{equation}
(\theta_1^{\alpha}-\theta_1^{\beta}) s_1^{\alpha}+\cdots+
(\theta_{m-1}^{\alpha}-\theta_{m-1}^{\beta})s_{m-1}^{\alpha}=0.
\end{equation}
Again, the minimality of $m$ gives
\begin{equation}\label{globint}
\theta_i^{\alpha}-\theta_i^{\beta} =0 \; \mathrm{in}\; \mathcal{U}_{\alpha\beta}\,, \; 1 \leq i \leq m-1.
\end{equation}
Therefore we obtain at least one nonconstant meromorphic
first  integral  $\Theta_i$ given locally by
$\Theta_{i|_{\mathcal{U}_{\alpha}}}=\theta_i^{\alpha}$.

Suppose now that $M$ is compact. Recall that the algebraic dimension $a(M)$ is the transcendence degree over $\mathbb{C}$ of the field $\mathscr{M}(M)$ of meromorphic functions on $M$, that is, $a(M)$ equals the maximal number of elements $\mathsf{f}_1, \dots, \mathsf{f}_a \in \mathscr{M}(M)$ satisfying
$$
d \mathsf{f}_1 \wedge \dots \wedge d \mathsf{f}_a \neq 0.
$$

The algebraic Reduction Theorem \cite[pg. 24, Theorem 3.1]{U} states that there exist a bimeromorphic modification $\widetilde{M} \umapright \zeta M$ and a holomorphic map $\pi : \widetilde{M} \longrightarrow V$, with connected fibres, onto an algebraic manifold $V$ of dimension $a(M)$, such that $\zeta^\ast \mathscr{M}(M) \simeq \pi^\ast \mathscr{M}(V)$. A similar construction holds for any algebraically closed subfield $\mathscr{K} \subset \mathscr{M}(M)$.

Now, let $\mathscr{M}(\F) \subset \mathscr{M}(M)$ be the subfield of meromorphic first integrals of $\F$. Invoking the algebraic Reduction Theorem we conclude, since $\mathscr{M}(\F)$ is a subfield of $\mathscr{M}(M)$, that there exists an algebraic manifold $N$ with
\begin{equation}\label{algred}
\dim N =\mathrm{tr}\!\deg_\mathbb{C} \mathscr{M}(\F)\leq  \mathrm{codim}_{\mathbb{C}} \F
\end{equation}
and  a meromorphic map
$\varphi:M\dashrightarrow N,$ such that the fibers of $\varphi$ are $\F$-invariant.

\end{proof}

\section{AN APPLICATION IN PROJECTIVE SPACES}\label{enum}

The next result is one in projective spaces and in this case we have at hand the notion of degree of a Pfaff system, which we now recall.

\begin{defi}\label{degree}
Let $\F$ be a codimension $n-k$ Pfaff equation on $\mathbb P^n$ given
by $\omega \in H^0(\mathbb P^n, \Omega^{n-k}_{\mathbb P^n} \otimes
\mathcal L)$. If $\mathrm{i}: \mathbb P^{n-k} \to \mathbb P^n$ is a generic
linear immersion then $\mathrm{i}^* \omega \in H^0(\mathbb P^{n-k},
\Omega^{n-k}_{\mathbb P^{n-k}} \otimes \mathcal L)$ is a section of a line
bundle, and its zero divisor reflects the tangencies between
$\F$ and $\mathrm{i}(\mathbb P^{n-k})$. The degree of $\F$ is the degree of such tangency divisor. It is noted $\deg(\F)$.
\end{defi}

Set $d:=\deg(\F)$. Since $\Omega^{n-k}_{\mathbb P^{n-k}}\otimes
\mathcal L= \mathcal O_{\mathbb P^{n-k}}( \deg(\mathcal L) - n+k - 1)$, one concludes that
$\mathcal L= \mathcal O_{\mathbb P^n}(d+ n-k + 1)$. Besides, invoking the Euler sequence a section $\omega$ of
$\Omega^{n-k}_{\mathbb P^n} ( d + n-k + 1  )$ can be
thought off as a polynomial $(n-k)$-form on $\mathbb{C}^{n+1}$ with  homogeneous
coefficients of degree $d + 1$, which we will still
denote by $\omega$, satisfying
\begin{equation}
\label{equirw}
i_\vartheta  \omega = 0
\end{equation}
where $\vartheta=x_0 \frac{\partial}{\partial x_0} + \cdots + x_n
\frac{\partial}{\partial x_n}$ is the radial vector field and $i_\vartheta$ means contraction by $\vartheta$. Thus the
study of Pfaff equations of codimension $n-k$ and degree $d$ on $\mathbb P^n$ reduces to the
study of locally decomposable homogeneous $(n-k)$-forms  on $\mathbb C^{n+1}$, of
degree $d+1$, satisfying relation (\ref{equirw}).

In particular, we have two ways to describe reduced one-dimensional  Pfaff equations $\mathcal{F}$ on $\mathbb{P}^n$, which in fact define foliations since the integrability condition is void in this case. One is through a homogeneous $(n-1)$-form $\omega \in \Omega^{n-1}_{\mathbb{P}^n}(d+n)$ of degree $d+1$ and, using the canonical isomorphism $E \cong \bigwedge^n E \otimes \bigwedge^{n-1} E^\ast$, where $E$ is a vector bundle of rank $n$,  through a section $\chi: T\mathbb{P}^n \otimes \mathscr{O}(d-1)$ that is, a vector field which annihilates $\omega$, $i_\chi \omega =0$. Equivalently, $\mathcal{F}$ is given by a morphism
$$
\Phi: \mathscr{O}(1-d) \longrightarrow T\mathbb{P}^n.
$$
Hence $\mathcal{F}$ is represented, in $\mathbb{C}^{n+1}$, by a homogeneous polynomial vector field $\chi$ of degree $d$ and, due to Euler's sequence, $\chi + g \vartheta$ represents the same foliation, where $g$ is any homogeneous polynomial of degree $d$.\\

\textbf{Proposition 1.1.}\emph{
Let $\F$ be a holomorphic foliation on
$\mathbb{P}^n$ of dimension $r$, of degree $d>0$ and with split tangent sheaf, that is, $T\F=\bigoplus_{i=1}^r T\F_i =\bigoplus_{i=1}^r\mathscr{O}_{\mathbb{P}^n}(1-d_i)$.  Suppose no $\F_i$ has a rational first integral, $i=1, \dots, r$. Then, the number of $\F$-invariant irreducible
hypersurfaces of degree $\nu$, counting multiplicities, is bounded by
\begin{equation}\label{numinv}
{\nu+n\choose n}+\frac{\deg(\F)}{\nu r}{{\nu + n\choose n}\choose 2} - \frac{1}{\nu} {{\nu + n\choose n}\choose 2} .
\end{equation}
}\\

Note that the splitting of the tangent sheaf implies $\deg(\F)= \sum_1^r d_i$.

In particular, if $T\F=\mathscr{O}_{\mathbb{P}^n}^{\oplus r}$, that is to say $\deg(\F_i) =d_i=1$, then the number of $\F$-invariant
hyperplanes is bounded by $n+1$. This bound is sharp in the case of hyperplanes,  as can be seen through the example
\[
 \sum_{i=0}^n \lambda_i \frac{dx_i}{x_i}
\]
where $n \geq 2$, $\sum \lambda_i=0$ and no $\lambda_i$ is equal to zero. This foliation has degree $n-1$ and leaves invariant the $n+1$ hyperplanes $x_i=0$, $i=0, \dots, n$.

This is in accordance with J.V. Pereira and S. Yuzvinsky \cite[Proposition 4.1]{PY} where it is shown that
a foliation $\F$ of codimension one on $\mathbb P^n$ with non-degenerate Gauss map has the number of invariant hyperplanes limited by
\[
\left( \frac{ n+1} { n-1}  \right)  \deg (\mathcal F) .
\]

\begin{proof}
With notations as in Section \ref{secaoext}, let $X_1, \dots, X_r$ be homogeneous polynomial vector fields on $\mathbb{C}^{n+1}$ inducing $\F$, with $X_i$ inducing the foliation $\F_i$, $i=1, \dots, r$. Let $V= H^0(\mathbb{P}^n, \mathscr{O}_{\mathbb{P}^n}(\nu))$ be the linear system consisting of the homogeneous polynomials of degree $\nu$ in $z_0, \dots, z_n$, $\dim V = K= {\nu + n\choose n}$. Since no $\F_i$ has a first integral, by  Remark \ref{exfield}  the extactics $\boldsymbol{\varepsilon}(\F_i,V)$, $1 \leq i \leq r$, are
not identically zero and given given by
\begin{equation}\label{tkx1}
\boldsymbol{\varepsilon}(\F_i, V) =\det \left[\begin{array}{cccc}
  s_1& s_2 & \cdots & s_{K}\\
  \\
   X_{i}(s_1) &X_{i}(s_2) & \cdots &X_{i}(s_{K})\\
   \\
   X_{i}^2(s_1) &X_{i}^2(s_2) & \cdots &
X_{i}^2(s_{K})
\\
  \vdots & \vdots & \ddots & \vdots\\
X_{i}^{K-1}(s_1) &X_{i}^{K-1}(s_2) & \cdots &
X_{i}^{K-1}(s_{K})
\end{array}
\right]
\end{equation}
where $\{ s_1, \dots, s_K\}$ is a basis of $V$.

Now, an irreducible hypersurface $M \subset \mathbb{P}^n$ is invariant by $\F$ if, and only if, $M$ is invariant by each one of the foliations $\F_i$, $i=1, \dots, r$. Hence, if $M$ has $f=0$ as irreducible defining equation, then $f$ factors each one of extactics above by Proposition \ref{jvpp} that is, $\boldsymbol{\varepsilon}(\F_i, V) = f \, R_i$ where $R_i$ is a polynomial. Let $\mathscr{E}$ be the product
\begin{equation}\label{prodex1}
\mathscr{E}= \boldsymbol{\varepsilon}(\F_1, V) \dots \boldsymbol{\varepsilon}(\F_r, V).
\end{equation}
If $f_1, \dots, f_N$ are irreducible polynomials of degree $\nu$, counting multiplicities, defining the hypersurfaces invariant by all of the $\F_i$s, then
\begin{equation}\label{prodex2}
\mathscr{E}= \underbrace{(f_1 \dots f_N) R_1}_{=\boldsymbol{\varepsilon}(\F_1, V)}  \dots \underbrace{(f_1 \dots f_N) R_r}_{=\boldsymbol{\varepsilon}(\F_r, V)} =(f_1 \dots f_N)^r R_1 \dots R_r.
\end{equation}
We now count degrees.
$$
\deg (f_1 \dots f_N)^r = r \sum\limits_1^N \deg(f_i) = r \nu N.
$$
By (\ref{prodex2})
\begin{equation}\label{prodex3}
\deg(\mathscr{E})= \sum\limits_1^r \deg(\boldsymbol{\varepsilon}(\F_i, V)) \geq r \nu N
\end{equation}
and as
$$
\deg(\boldsymbol{\varepsilon}(\F_i, V)) = K \nu + (d_i -1) {K \choose 2}
$$
we have, by (\ref{prodex3}),
\begin{equation}\label{prodex4}
\deg(\mathscr{E})= K r \nu + \sum\limits_1^r (d_i -1) {K \choose 2} \geq r \nu N.
\end{equation}
This gives
$$
N \leq K +\frac{\sum\limits_1^r (d_i -1)}{r \nu}  {K \choose 2},
$$
and as $\sum\limits_1^r d_i = \deg (\F)$, we are left with
\begin{equation}\label{prodex5}
N \leq K +\frac{\deg (\F)}{r \nu}  {K \choose 2} - \frac{1}{\nu} {K \choose 2}.
\end{equation}
\end{proof}

\section*{Acknowledgments}
This work was partially supported by CNPq, CAPES and FAPEMIG (Brasil). We thank the referee for useful comments and for introducing to us the work \cite{Est}. The third named author is grateful to IMPA for support and hospitality.

\maketitle

\end{document}